\newcommand\R{{\mathbb{R}}}
\newcommand\T{{\mathbb{T}}}
\newcommand\N{{\mathbb{N}}}
\newcommand\supp{\operatorname{supp}}
\newcommand\CA{{\mathcal A}}
\newcommand\CC{{\mathcal C}}
\newcommand\CD{{\mathcal D}}
\newcommand\CS{{\mathcal S}}
\DeclareMathOperator{\pv}{p.v.}
\theoremstyle{plain}
  \newtheorem{theorem}{Theorem}
  \newtheorem{proposition}[theorem]{Proposition}
  \newtheorem{corollary}[theorem]{Corollary}
\theoremstyle{definition}
  \newtheorem{remark}[theorem]{Remark}
\numberwithin{theorem}{section}
\date{\today}
\author{David Beltran}
\address{School of Mathematics\\
University of Birmingham\\
Watson Building\\
Edgbaston\\
Birmingham\\
B15 2TT\\
United Kingdom}
\email{dbeltran89@gmail.com}
\thanks{This work was supported by the European Research Council [grant
number 307617]}
\title{A Fefferman--Stein inequality for the Carleson operator}
\keywords{Carleson operator; Weighted inequality; Sparse operators; Maximal operators}
\subjclass[2010]{42B20; 42B25}
\begin{document}

%
%

\begin{abstract}
We provide a Fefferman--Stein type weighted inequality for maximally modulated Calderón--Zygmund operators that satisfy \textit{a priori} weak type unweighted estimates. This inequality corresponds to a maximally modulated version of a result of Pérez. Applying it to the Hilbert transform we obtain the corresponding inequality for the Carleson operator $\mathcal{C}$, that is $\mathcal{C}: L^p(M^{\lfloor p \rfloor +1}w) \to L^p(w)$ for any $1<p<\infty$ and any weight function $w$, with bound independent of $w$. We also provide a maximal-multiplier weighted theorem, a vector-valued extension, and more general two-weighted inequalities. Our proof builds on a recent work of Di Plinio and Lerner combined with some results on Orlicz spaces developed by Pérez.
\end{abstract}

%
%

\maketitle

\section{Introduction}
Let $M$ denote the Hardy--Littlewood maximal operator. In 1971, Fefferman and Stein \cite{FS} proved that there is a constant $C_n<\infty$ \footnote{Here and throughout the paper we use the letter $C$ to denote a constant that may change from line to line that is, in particular, independent of the function $f$ and the weight $w$. We also use the notation $A \lesssim B$ to denote that there is a constant $C$ such that $A \leq CB$.} such that for any $1<p<\infty$ 
\begin{equation}\label{FSineq}
\int_{\R^n}|Mf(x)|^p w(x)dx \leq C_n p' \int_{\R^n}|f(x)|^p Mw(x)dx
\end{equation}
holds for all weight functions $w$, where $p'$ denotes the conjugate exponent of $p$, that is $1/p+1/p'=1$. By weight we mean a non-negative locally integrable function.

The question of the existence of inequalities like \eqref{FSineq} has been raised on a more general level. More precisely, given an operator $U$ and an exponent $p \in [1,\infty)$, one may attempt to identify a controlling maximal operator $\mathfrak{M}$ for which there is a constant $C<\infty$ such that
\begin{equation}\label{GeneralFS}
\int_{\R^n} |Uf(x)|^pw(x)dx \leq C \int_{\R^n} |f(x)|^p \mathfrak{M}w(x)dx
\end{equation}
holds for all admissible functions $f$ and weights $w$. An inequality like \eqref{GeneralFS}, together with an elementary duality argument, allows to transfer bounds from $\mathfrak{M}$ to $U$, that is,
\begin{equation}\label{Bounds}
\|U\|_{L^q \to L^{\tilde{q}}} \lesssim \|\mathfrak{M}\|_{L^{(\tilde{q}/p)'} \to L^{(q/p)'}}^{1/p} 
\end{equation}
for $q,\tilde{q}\geq p$. This induces a concept of \textit{optimality} in the maximal operator $\mathfrak{M}$ and the weighted inequalities \eqref{GeneralFS}; for any fixed exponent $p$, we would like to determine a maximal operator $\mathfrak{M}$ whose Lebesgue space bounds allow to recover optimal Lebesgue space bounds for $U$ via \eqref{Bounds}.

In this paper we address the problem of controlling the Carleson operator and more general maximally modulated Calderón--Zygmund operators by an optimal maximal function. Despite the oscillatory nature of these operators, the techniques we make use of are closer to Calderón--Zygmund theory. We start therefore with a careful inspection of inequalities of the type \eqref{GeneralFS} in the context of Calderón--Zygmund operators.

A Calderón--Zygmund operator $T$ on $\R^n$ is an $L^2$-bounded operator represented as
$$
Tf(x)=\int_{\R^n}K(x,y)f(y)dy, \:\:\:\:\: x \not \in \supp f,
$$
where the kernel $K$ satisfies
\begin{enumerate}[(i)]
\item $|K(x,y)|\leq \frac{C}{|x-y|^n}$ for all $x \neq y$;
\item $|K(x,y)-K(x',y)|+|K(y,x)-K(y,x')|\leq C\frac{|x-x'|^\delta}{|x-y|^{n+\delta}}$ for some $0<\delta\leq 1$ when $|x-x'|<|x-y|/2$.
\end{enumerate}
Córdoba and Fefferman \cite{CF} showed that for $s>1$ and $1<p<\infty$ there is a constant $C<\infty$ such that
\begin{equation}\label{CFineq}
\int_{\R^n}|Tf(x)|^pw(x)dx \leq C \int_{\R^n} |f(x)|^p M_sw(x)dx
\end{equation}
holds for any weight $w$, where $M_sw(x):=(Mw^s(x))^{1/s}$.\footnote{This can also be seen as a consequence of the $A_p$ theory, since $M_sw \in A_1 \subset A_p$ for $p>1$, with constant independent of $w$, and $w\leq M_sw$.} Observe that the operator $M_s$ is not an \textit{optimal} maximal operator in the sense described by \eqref{Bounds}. For each $s>1$, $M_s$ fails to be bounded on $L^r$ for $1<r\leq s$. Thus, for a fixed $1<p<\infty$, the mechanism \eqref{Bounds} only allows one to recover $L^q$ bounds for $T$ in the restricted range $p\leq q<ps'$, missing the exponents in $[ps',\infty)$; we recall that $T$ is an $L^q$ bounded operator for $1<q<\infty$. This problem was resolved by Wilson \cite{Wi89} in the range $1<p\leq 2$ and by Pérez \cite{Pe94} in the whole range $1<p<\infty$, who showed that for $1<p<\infty$, there is a constant $C<\infty$ such that
\begin{equation}\label{WPineq}
\int_{\R^n}|Tf(x)|^p w(x)dx \leq C \int_{\R^n}|f(x)|^p M^{\lfloor p \rfloor +1}w(x)dx
\end{equation}
holds for any weight $w$. Here $\lfloor p \rfloor$ denotes the integer part of $p$ and $M^{\lfloor p \rfloor +1}$ denotes the ($\lfloor p \rfloor +1$)-fold composition of $M$. The operator $M^{\lfloor p \rfloor +1}$ is bounded on $L^r$, $1<r<\infty$, for any $p$. Thus, given $1<p<\infty$, it is an optimal maximal function as we can recover via \eqref{Bounds} the $L^q$ boundedness of $T$ for the whole range $p \leq q<\infty$. Furthermore, their result is best possible in the sense that \eqref{WPineq} fails if $M^{\lfloor p \rfloor +1}$ is replaced by $M^{\lfloor p \rfloor}$. It should be noted that for each $s>1$ and $k\geq 1$, the pointwise estimate $M^kw(x) \leq C M_sw(x)$ holds for some constant $C$ independent of $w$.

Our goal is to extend \eqref{WPineq} to a broad class of maximally modulated Calderón--Zygmund operators studied previously by Grafakos, Martell and Soria \cite{GMS} and Di Plinio and Lerner \cite{DiPL}. Let $\Phi=\{\phi_\alpha\}_{\alpha \in A}$ be a family of real-valued measurable functions indexed by an arbitrary set $A$. The \textit{maximally modulated Calderón--Zygmund} operator $T^{\Phi}$ is defined by
\begin{equation}\label{modulated}
T^{\Phi}f(x):=\sup_{\alpha \in A} |T(\mathcal{M}^{\phi_{\alpha}}f)(x)|,
\end{equation}
where $\mathcal{M}^{\phi_\alpha}f(x):=e^{2\pi i \phi_\alpha(x)}f(x)$. We will consider operators $T^{\Phi}$ such that for some $r_0>1$ satisfy the \textit{a priori} weak-type unweighted inequalities
\begin{equation}\label{wt}
\|T^{\Phi}f\|_{r,\infty} \lesssim \psi(r)\|f\|_r
\end{equation}
for $1<r\leq r_0$, where $\psi(r)$ is a function that captures the dependence of the operator norm on $r$. This definition is of course motivated by the Carleson operator
\begin{equation*}
\mathcal{C}f(x)=\sup_{\alpha \in \R} \left|\pv \int_\R \frac{e^{2\pi i \alpha y }}{x-y}f(y)dy \right|,
\end{equation*}
since it may be recovered from \eqref{modulated} by setting $T=H$ and $\Phi$ to be the family of functions given by $\phi_\alpha(x)=\alpha x$ for $\alpha \in \R$. Expressing $\mathcal{C}f$ in terms of $\widehat{f}$ allows it to be reconciled with the classical expression for the Carleson maximal operator in terms of partial Fourier integrals. It is well known that the Carleson operator satisfies condition \eqref{wt} for $1<r<\infty$, implying almost everywhere convergence of Fourier series for functions in $L^r$; see Carleson \cite{Carleson} and Hunt \cite{Hunt} for the celebrated Carleson--Hunt theorem or Fefferman \cite{FeffermanCarleson} and Lacey and Thiele \cite{LT} for alternative proofs.

Implicit in the work of Di Plinio and Lerner \cite{DiPL} there is the following analogue of the estimate \eqref{CFineq} for maximally modulated Calderón--Zygmund operators.\footnote{Again, this result can be seen as a consequence of the $A_\infty$ theory in \cite{GMS}. In the case of the Carleson operator $\CC$ the result follows from the $A_p$ theory in \cite{HY}.}

\begin{theorem}\label{DLtheorem}
Let $T^{\Phi}$ be a maximally modulated Calderón--Zygmund operator satisfying \eqref{wt}. Then for $1<s<2$ and $1<p<\infty$ there is a constant $C<\infty$ such that for any weight $w$
\begin{equation}\label{FSDL}
\int_{\R^n} |T^{\Phi}f(x)|^p w(x)dx \leq C \int_{\R^n} |f(x)|^p M_sw(x)dx.
\end{equation}
\end{theorem}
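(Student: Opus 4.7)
The plan is to reduce Theorem~\ref{DLtheorem} to the quantitative $A_p$-weighted boundedness of $T^\Phi$ implicit in \cite{DiPL} (see also \cite{GMS}), combined with the classical Coifman--Rochberg observation that $M_s w$ is an $A_1$ weight with constant independent of $w$. Conceptually, this is an $A_\infty$/$A_p$ extrapolation that converts an abstract weighted bound on the operator into a Fefferman--Stein-type bound through a particular choice of weight.

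First I would reduce the task to proving the stronger estimate in which $w$ is replaced by $M_s w$ on both sides. Indeed, since $s>1$, Jensen's inequality gives the pointwise bound $w(x) \leq Mw(x) \leq M_s w(x)$ almost everywhere, so
\begin{equation*}
\int_{\R^n} |T^{\Phi} f(x)|^p w(x)\, dx \leq \int_{\R^n} |T^{\Phi} f(x)|^p M_s w(x)\, dx.
\end{equation*}
Second I would invoke the Coifman--Rochberg theorem: for any $\delta \in (0,1)$ and any $h \geq 0$ with $Mh$ finite a.e., the weight $(Mh)^\delta$ belongs to $A_1$ with constant depending only on $\delta$. Taking $h = w^s$ and $\delta = 1/s \in (0,1)$, which is exactly where the hypothesis $s>1$ is used, yields $M_s w = (M w^s)^{1/s} \in A_1 \subset A_p$ with $[M_s w]_{A_p}$ depending only on $s$ and $p$ and, crucially, independent of $w$.

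Third, and this is the heart of the matter, I would show that $T^{\Phi}$ is bounded on $L^p(u)$ for every $u \in A_p$, with norm depending only on $p$, $[u]_{A_p}$, the parameter $r_0$ and the function $\psi$ appearing in \eqref{wt}. This $L^p(A_p)$ bound is where the unweighted hypothesis \eqref{wt} is consumed: one inspects the weighted Calderón--Zygmund/sparse argument of \cite{DiPL}, which for the Carleson operator uses the unweighted weak-type bound for the Hilbert transform, and replaces that input with \eqref{wt}. Feeding $u = M_s w$ into this bound produces
\begin{equation*}
\int_{\R^n} |T^{\Phi} f(x)|^p M_s w(x)\, dx \leq C \int_{\R^n} |f(x)|^p M_s w(x)\, dx
\end{equation*}
with $C$ independent of $w$, and the first display then gives \eqref{FSDL}.

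The main obstacle is the bookkeeping in the third step: tracing through the argument of \cite{DiPL} to verify that nothing Carleson-specific is used beyond \eqref{wt}, and quantifying the dependence of the $L^p(A_p)$ norm of $T^{\Phi}$ on $[u]_{A_p}$, $r_0$ and $\psi$; the role of the restriction $s<2$ will emerge here, reflecting the range in which \eqref{wt} can be inserted into the Di Plinio--Lerner mechanism to yield an $A_p$ rather than merely $A_\infty$ estimate. Once this step is granted, the remaining chain of inequalities is essentially formal.
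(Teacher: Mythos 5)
Your proposal matches the route the paper itself indicates: Theorem~\ref{DLtheorem} is stated as ``implicit in \cite{DiPL}'' and the accompanying footnote (cf.\ also the footnote at \eqref{CFineq}) points exactly to the chain you describe, namely $w \leq M_s w$ pointwise, $M_s w = (Mw^s)^{1/s} \in A_1 \subset A_p$ with constant independent of $w$ by Coifman--Rochberg, and then the quantitative $L^p(A_p)$ bounds for $T^\Phi$ established via the sparse machinery of Di Plinio--Lerner under hypothesis \eqref{wt}. The only misstep is your explanation of the restriction $s<2$: it plays no role in this argument (only $s>1$ is used), nor does it reflect any obstruction in feeding \eqref{wt} into the Di Plinio--Lerner mechanism; since $M_{s_1}w \leq M_{s_2}w$ for $s_1 \leq s_2$, the inequality \eqref{FSDL} only weakens as $s$ grows, so stating it for $1<s<2$ is merely a harmless normalization.
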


Note that the inequality \eqref{CFineq} can be recovered from \eqref{FSDL} simply by taking $\phi_\alpha \equiv 0$ for all $\alpha$. As in the case of \eqref{CFineq}, for any fixed $1<p<\infty$ and $1<s<2$, Theorem \ref{DLtheorem} does not allow one to recover the full range of Lebesgue space bounds for $T^\Phi$ from those for $M_s$ via \eqref{Bounds}.

One may address this question and obtain optimal control of $T^\Phi$ by combining the ideas developed by Pérez in \cite{Pe94,Pe95} with Di Plinio and Lerner's argument \cite{DiPL}. The main result of this paper is the following.

\begin{theorem}\label{FSmczo}
Let $T^{\Phi}$ be a maximally modulated Calderón--Zygmund operator satisfying \eqref{wt}. Then for any $1<p<\infty$ there is a constant $C<\infty$ such that for any weight $w$
\begin{equation}\label{FSimportant}
\int_{\R^n} |T^{\Phi}f(x)|^p w(x)dx \leq C \int_{\R^n} |f(x)|^p M^{\lfloor p \rfloor +1}w(x)dx.
\end{equation}
\end{theorem}

As may be expected, the constant $C$ in \eqref{FSmA} depends on the exponent $p$ and the assumed weak-type estimate \eqref{wt}; we refer to Section \ref{sec:remarks} for a further discussion.

Of course, one may recover the estimate \eqref{WPineq} from Theorem \ref{FSmczo}. As observed for \eqref{WPineq}, given $1<p<\infty$, the control given by the maximal operator $M^{\lfloor p \rfloor +1}$ is optimal here. Moreover, one cannot replace $\lfloor p \rfloor +1$ by $\lfloor p \rfloor$ in the statement of Theorem \ref{FSmczo} as the resulting inequality is shown to be false for the (unmodulated) Hilbert transform \cite{Pe94}.

Theorem \ref{FSmczo} may be viewed as a corollary of a more precise statement that allows one to replace $M^{\lfloor p \rfloor +1}$ by a sharper class of maximal operators. This strategy originates in Pérez \cite{Pe94} for the case of unmodulated Calderón--Zygmund operators.

Let $A$ be a Young function, that is, $A:[0,\infty) \to [0,\infty)$ is a continuous, convex, increasing function with $A(0)=0$ and such that $A(t)\to \infty$ as $t\to \infty$. We say that a Young function $A$ is doubling if there exists a positive constant $C$ such that $A(2t) \leq C A(t)$ for $t>0$. For each cube $Q \subset \R^n$, we define the Luxemburg norm of $f$ over $Q$ by
$$
\|f\|_{A,Q}=\inf \Big\{\lambda>0: \frac{1}{|Q|}\int_Q A\left(\frac{|f(y)|}{\lambda} \right)dy \leq 1\Big\}
$$
and the maximal operator $M_A$ by
\begin{equation*}
M_Af(x)=\sup_{Q \ni x} \|f\|_{A,Q},
\end{equation*}
where $f$ is a locally integrable function and the supremum is taken over all cubes $Q$ in $\R^n$ containing $x$.\footnote{The Orlicz space $L^A(\R^n)$ consists of all measurable functions with finite finite global Luxemburg norm
$\|f\|_{A}$, where the measure $\frac{\chi_Q}{|Q|}dx$ in the definition of $\|f\|_{A,Q}$ is replaced by $dx$.} In this context we are able to characterize the class of Young functions for which a Fefferman--Stein inequality holds with controlling maximal operator $\mathfrak{M}=M_A$.

\begin{theorem}\label{FSmAthm}
Let $T^{\Phi}$ be a maximally modulated Calderón--Zygmund operator satisfying \eqref{wt}. Suppose that $A$ is a doubling Young function satisfying
\begin{equation}\label{Apcondition}
\int_c^{\infty} \left(\frac{t}{A(t)} \right)^{p'-1}\frac{dt}{t}<\infty
\end{equation}
for some $c>0$. Then for any $1<p<\infty$ there is a constant $C<\infty$ such that for any weight $w$
\begin{equation}\label{FSmA}
\int_{\R^n} |T^{\Phi}f(x)|^p w(x)dx \leq C \int_{\R^n} |f(x)|^p M_Aw(x)dx.
\end{equation}
\end{theorem}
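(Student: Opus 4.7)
The proof combines the dyadic sparse framework of Di Plinio and Lerner \cite{DiPL}, already used to prove Theorem \ref{DLtheorem}, with the Orlicz maximal techniques developed by P\'erez in \cite{Pe94,Pe95}. Theorem \ref{FSmAthm} implies Theorem \ref{FSmczo} because there is a doubling Young function $A$ satisfying \eqref{Apcondition} for the given $p$ for which the pointwise bound $M_A w \lesssim M^{\lfloor p \rfloor + 1} w$ holds. By standard measurable selection I replace $T^\Phi$ with a linearization $T_\alpha$ that still satisfies \eqref{wt}, and by duality the target estimate \eqref{FSmA} reduces to showing
\begin{equation*}
\Big| \int (T_\alpha f)\, g\, w\, dx \Big| \lesssim \|f\|_{L^p(M_A w)} \|g\|_{L^{p'}(w)}
\end{equation*}
for nonnegative test functions $f, g \in L^\infty_c$.

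The first main step is a sparse bilinear bound. Following \cite{DiPL}, I apply a Calder\'on-Zygmund decomposition of $f$ at an appropriate level to split $f = h + \sum_j b_j$, with each $b_j$ supported on a cube $Q_j$ of a pairwise disjoint family. The global part $T_\alpha b_j$ on $(3Q_j)^c$ is dominated pointwise via the kernel regularity (ii); the modulation plays no role here because $|\mathcal{M}^{\phi_\alpha} b_j| = |b_j|$ and modulations do not affect kernel differences. The local part on $3Q_j$ admits no pointwise kernel representation, which is the new feature caused by the modulation, and is instead handled by applying the weak-type hypothesis \eqref{wt} at some $r \in (1, r_0]$ to the modulated local input, exactly as in \cite{DiPL}. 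Iterating across dyadic generations produces a sparse family $\mathcal{S}$ and yields
\begin{equation*}
\Big| \int (T_\alpha f)\, g\, w\, dx \Big| \lesssim \sum_{Q \in \mathcal{S}} \langle f \rangle_Q \, \langle g w \rangle_Q \, |Q|.
\end{equation*}

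The second step is to bound this sparse sum by $\|f\|_{L^p(M_A w)} \|g\|_{L^{p'}(w)}$, and here P\'erez's Orlicz machinery applies directly. By the generalized H\"older inequality one has $\langle g w \rangle_Q \lesssim \|g\|_{\bar A, Q} \|w\|_{A, Q} \leq (M_{\bar A} g)(x)\,(M_A w)(x)$ for every $x \in Q$, where $\bar A$ is the complementary Young function. Inserting this into the sparse sum, using the pointwise bound $\langle f \rangle_Q \leq (Mf)(x)$ on $Q$, the sparsity of $\mathcal{S}$ to pass from a sum over cubes to an integral, and H\"older's inequality, reduces the task to the $L^{p'}$ boundedness of $M_{\bar A}$. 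Condition \eqref{Apcondition} is precisely P\'erez's $B_{p'}$ condition on $A$ characterizing $M_{\bar A} \colon L^{p'} \to L^{p'}$, so this step closes the estimate. The main obstacle I anticipate is the sparse step: the modulation destroys the pointwise kernel control of $T^\Phi$ on local scales, so P\'erez's original local argument for \eqref{WPineq} fails outright, and the Di Plinio-Lerner workaround via \eqref{wt} must be executed with enough precision that the averages in the final sparse sum are of exactly the right form to be absorbed into the Orlicz control provided by \eqref{Apcondition}.
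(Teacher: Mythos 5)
Your Orlicz absorption step contains a genuine gap. You assert that condition \eqref{Apcondition} ``is precisely P\'erez's $B_{p'}$ condition on $A$ characterizing $M_{\bar A}\colon L^{p'}\to L^{p'}$,'' but it is not. By Theorem \ref{PeCharac}(i) with $p$ replaced by $p'$ and $B$ by $\bar A$, the condition $\bar A\in B_{p'}$ reads
$$
\int_c^\infty \left(\frac{t^{p}}{A(t)}\right)^{p'-1}\frac{dt}{t}<\infty,
$$
which differs from \eqref{Apcondition} by an extra factor of $t$ in the integrand and is strictly stronger. For a concrete failure, take $A(t)=t^s$ with $1<s\le p$: then \eqref{Apcondition} holds (it holds for every $s>1$), $M_Aw=M_sw$, and the conclusion \eqref{FSmA} is in fact true (cf.\ \eqref{CFineq} and Theorem \ref{DLtheorem}); yet $\bar A(t)\approx t^{s'}$ with $s'\ge p'$, so $M_{\bar A}=M_{s'}$ is unbounded on $L^{p'}$ and your argument never closes. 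There is also a consistency problem upstream: you correctly note that the modulation forces the local part through the weak-type hypothesis at some $r>1$, but the sparse bound you then write has $L^1$ averages $\langle f\rangle_Q$ in the $f$-slot. In the modulated setting (Propositions \ref{oscillationProp} and \ref{L24}) those must be the $L^r$ averages $\left(\frac{1}{|\bar Q|}\int_{\bar Q}|f|^r\right)^{1/r}$, and this $r>1$ loss is exactly what makes the absorption delicate.

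The paper's proof of Theorem \ref{SparseBound} takes a substantively different route to avoid both problems. It linearizes $\mathcal{A}_{r,\mathcal{S}}$, passes to the adjoint, and reduces to the comparison $\|L^*h\|_{L^{p'}((M_Aw)^{1-p'})} \lesssim \|Mh\|_{L^{p'}((M_Aw)^{1-p'})}$, proved via a second duality, sparseness, H\"older's inequality with the exponent $(p+1)/2$ (requiring $r<(p+1)/2$), the classical Fefferman--Stein inequality for $M_r$, and the Coifman--Rochberg lemma (Proposition \ref{Coifman}). Condition \eqref{Apcondition} enters only once, through Theorem \ref{PeCharac}(v) with $p\to p'$ and $u\equiv 1$, to obtain the weighted bound $\|Mh\|_{L^{p'}((M_Aw)^{1-p'})}\lesssim \|h\|_{L^{p'}(w^{1-p'})}$ --- a statement about $M$, not about $M_{\bar A}$. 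You would need either to reproduce that mechanism or to find another way to absorb the Orlicz average; the naive $L^{p'}$ boundedness of $M_{\bar A}$ is simply not available under the stated hypothesis.
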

In the unmodulated setting, Pérez \cite{Pe94} pointed out that condition \eqref{Apcondition} is necessary for \eqref{FSmA} to hold for the Riesz transforms. Hence it becomes a necessary condition for Theorem \ref{FSmAthm} to be stated in such a generality, characterizing the class of Young functions for which \eqref{FSmA} holds. It is interesting to observe that the condition \eqref{Apcondition} does not depend on the behaviour of the weak-type norm $\psi(r)$.

Applying our main result to the Carleson operator one may deduce the corresponding Fefferman--Stein weighted inequalities.

\begin{corollary}\label{FSCarleson}
Let $\mathcal{C}$ be the Carleson operator. Then for any $1<p<\infty$ there is a constant $C<\infty$ such that for every weight $w$
\begin{equation}\label{FScarlineq}
\int_{\R} |\mathcal{C}f(x)|^p w(x)dx \leq C \int_{\R} |f(x)|^p M^{\lfloor p \rfloor +1}w(x)dx.
\end{equation}
\end{corollary}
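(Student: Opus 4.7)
The strategy is simply to check that the Carleson operator $\mathcal{C}$ falls within the scope of Theorem \ref{FSmczo}, and then to quote that theorem with $T^{\Phi}=\mathcal{C}$ and $n=1$.

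First I would recall, as already noted in the discussion following \eqref{modulated}, that $\mathcal{C}$ is a maximally modulated Calder\'on-Zygmund operator in the sense of the paper: take $T$ to be the Hilbert transform $H$ on $\R$, whose kernel $K(x,y)=\pv\,\frac{1}{x-y}$ manifestly satisfies the size and smoothness conditions (i) and (ii) with $\delta=1$, and take $\Phi=\{\phi_\alpha\}_{\alpha\in\R}$ with $\phi_\alpha(x)=\alpha x$. Then by definition
\[
T^{\Phi}f(x)=\sup_{\alpha\in\R}\bigl|H(e^{2\pi i\alpha\cdot}f)(x)\bigr|=\sup_{\alpha\in\R}\left|\pv\int_{\R}\frac{e^{2\pi i\alpha y}}{x-y}f(y)\,dy\right|=\mathcal{C}f(x).
\]

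Next I would verify the \textit{a priori} weak-type hypothesis \eqref{wt}. This is the content of the classical Carleson--Hunt theorem: for every $1<r<\infty$ one has $\|\mathcal{C}f\|_{L^{r,\infty}(\R)}\lesssim \psi(r)\|f\|_{L^r(\R)}$ for a suitable function $\psi$. In particular, choosing any $r_0\in(1,2)$ (or indeed any $r_0>1$), the hypothesis \eqref{wt} of Theorem \ref{FSmczo} is met.

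With both hypotheses in place, a direct application of Theorem \ref{FSmczo} to $T^{\Phi}=\mathcal{C}$ produces, for each $1<p<\infty$, a constant $C<\infty$ such that
\[
\int_{\R}|\mathcal{C}f(x)|^p w(x)\,dx\leq C\int_{\R}|f(x)|^p M^{\lfloor p\rfloor+1}w(x)\,dx
\]
for every weight $w$, which is precisely \eqref{FScarlineq}. Since the two ingredients (the Calder\'on-Zygmund kernel structure of $H$ and the Carleson--Hunt weak-type bounds for $\mathcal{C}$) are standard and require no new argument, there is essentially no obstacle: the only work is to recognize $\mathcal{C}$ as an instance of $T^{\Phi}$ satisfying \eqref{wt} and invoke the main theorem.
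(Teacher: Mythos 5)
Your argument is correct and coincides with the paper's own treatment: the paper likewise observes that $\mathcal{C}$ is the maximally modulated Calder\'on--Zygmund operator obtained from the Hilbert transform with modulations $\phi_\alpha(x)=\alpha x$, recalls that $\mathcal{C}$ satisfies the weak-type hypothesis \eqref{wt} (the Carleson--Hunt theorem), and then applies Theorem \ref{FSmczo} directly. Your write-up simply spells out these verifications in slightly more detail than the one-line remark preceding Corollary \ref{FSCarleson} in the paper.
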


We shall remark that weighted inequalities for the Carleson operator have been previously studied by many authors. Hunt and Young \cite{HY} established the $L^p(w)$ boundedness of $\mathcal{C}$ for $1<p<\infty$ and $w \in A_p$. Later Grafakos, Martell and Soria \cite{GMS} gave new weighted inequalities for weights in $A_\infty$, as well as vector-valued inequalities for $\mathcal{C}$. More recently, Do and Lacey \cite{DL} deduced weighted estimates for a variation norm version of $\mathcal{C}$ in the context of $A_p$ theory that strengthened the results in \cite{HY}. Finally, Di Plinio and Lerner \cite{DiPL} obtained $L^p(w)$ bounds for $\mathcal{C}$ in terms of the $[w]_{A_q}$ constants for $1\leq q \leq p$. Note that inequality \eqref{FScarlineq} does not fall within the scope of the classical $A_p$ theory.

Indeed the oscillatory nature of the Carleson operator and the operators $T^\Phi$ brings to mind weighted inequalities of the form \eqref{GeneralFS} in other oscillatory contexts. Such inequalities have received considerable attention over the last decades, notably with a longstanding conjecture raised by Stein in the context of the disc multiplier. In \cite{Stein79}, Stein suggested that the disc multiplier may be controlled, via a general weighted $L^2$ inequality, by some variant of the universal maximal function
$$
\mathcal{N}w(x):=\sup_{R \ni x} \frac{1}{|R|}\int_{R}w(z)dz,
$$
where the supremum is taken over all rectangles $R$ containing $x$. This conjecture, which is often referred to as \textit{Stein's conjecture}, is far from having a satisfactory answer for $n\geq 2$,\footnote{The case $n=1$ reduces to the study of the Hilbert transform.} although positive results were obtained by Carbery, Romera and Soria \cite{CRS} in the setting of radial weights. Of course Stein's conjecture may be viewed as a departure point towards a generalisation of the weighted inequality \eqref{FScarlineq} to higher dimensions, where we naturally define the $n$-dimensional Carleson operator to be
$$
\mathcal{C}^n f(x):=\sup_{r>0} \left|\int_{|\xi|<r}\widehat{f}(\xi)e^{2\pi ix\cdot \xi}d\xi \right|
$$
for $x \in \R^n$. A similar conjecture to that of Stein's was raised by Córdoba \cite{CordKak} in the context of Bochner--Riesz multipliers; see Carbery and Seeger \cite{CS} or Lee, Rogers and Seeger \cite{LRSw} for results in this direction. For further recent examples of control of highly oscillatory operators by maximal functions we refer to Bennett, Carbery, Soria and Vargas \cite{BCSV}, Bennett and Harrison \cite{BH}, Bennett \cite{Be13}, Córdoba and Rogers \cite{CR2014} and the work of Bennett and the author \cite{BB2015}. The results of this paper may be seen to combine aspects of the more classical inequalities \eqref{FSineq} and \eqref{WPineq} and the oscillatory examples just described.
\\

\noindent \textbf{Structure of the paper.} In Section \ref{sec:dyadic} we present the results of Lerner \cite{LeA2, LeCZ, DiPL} that allow one to bound in norm $T^{\Phi}$ by the so-called dyadic sparse operators. In Section \ref{maximal} we recall the results obtained by Pérez in \cite{Pe94,Pe95} concerning the maximal operator associated to a Young function. In Section \ref{proofthm} we provide the proof of Theorem \ref{FSmAthm} and explain how to apply it to deduce Theorem \ref{FSmczo}. Section \ref{Applications} contains some applications that may be deduced from our main result, and Section \ref{sec:VVextensions} is devoted to a vector-valued extension of the main theorem. In Section \ref{sec:TwoWeighted} we are concerned with an extension of our result to a more general two-weight setting. Finally, we conclude with a section on further remarks.
\\

\noindent \textbf{Acknowledgements.} The author would like to thank his supervisor Jon Bennett for his continuous support and for many valuable comments on the exposition of this paper.

\section{A norm estimate by dyadic sparse operators}\label{sec:dyadic}

Here we present a result in \cite{DiPL} that allows one to reduce the proof of \eqref{FSmA} to a Fefferman--Stein inequality for dyadic sparse operators. This reduction rests on a certain local mean oscillation estimate. Such estimates have been developed by Lerner and other authors and have become a powerful technique over the last few years. See, for instance, \cite{LePointwise,LeCZ,DiPL,Hyt14,HLP}. We have considered it instructive to recall this local mean oscillation estimate approach as it will also be used for the vector-valued extension presented in Section \ref{sec:VVextensions}.

We start by recalling some standard definitions. By a general dyadic grid $\mathcal{D}$ we mean a collection of cubes such that: (i) any $Q \in \mathcal{D}$ has sidelength $2^k$, $k \in \mathbb{Z}$; (ii) for any $Q, R \in \mathcal{D}$, we have $Q \cap R \in \{Q,R,\emptyset\}$; (iii) the cubes of a fixed sidelength $2^k$ form a partition of $\R^n$. Given a cube $Q_0$ we denote by $\CD(Q_0)$ the set of all dyadic cubes with respect to $Q_0$, that is, the cubes obtained by dividing dyadically $Q_0$ and its descendants into $2^n$ subcubes. 

We say that $\mathcal{S}$ is a \textit{sparse} family of cubes if for any cube $Q \in \mathcal{S}$ there is a measurable subset $E(Q)\subseteq Q$ such that $|Q|\leq 2|E(Q)|$ and the sets $\{E(Q)\}_{Q \in \mathcal{S}}$ are pairwise disjoint.

Given a measurable function $f$ and a cube $Q$, the local mean oscillation of $f$ on $Q$ is defined by
$$
\omega_\lambda(f;Q)=\inf_{c \in \R}((f-c)\chi_Q)^*(\lambda|Q|)
$$
for $0<\lambda<1$, where $f^*$ denotes the non-increasing rearrangement of $f$.

The median value of $f$ over a cube $Q$, denoted by $m_f(Q)$, is a nonunique real number such that
$$
|\{x \in Q : f(x)>m_f(Q)\}| \leq |Q|/2
\hspace{0.5cm} \text{and} \hspace{0.5cm}
|\{x \in Q : f(x)<m_f(Q)\}| \leq |Q|/2.
$$

Given a measurable function $f$ and a cube $Q_0$, one may control pointwise the value of $f$ on $Q_0$ in terms of the median value of $f$ on $Q_0$ and the local mean oscillation of $f$ in a sparse family of cubes. A first version of this result was obtained by Lerner \cite{LePointwise}; see \cite{Hyt14} for the following refined version.

\begin{theorem}[\cite{Hyt14}]\label{LernerFormula}
Let $f$ be a measurable function on $\R^n$ and $Q_0$ be a fixed cube. Then there exists a sparse family of cubes $\CS \subset \CD(Q_0)$ such that
$$
|f(x)-m_f(Q_0)| \leq 2 \sum_{Q \in \CS} \omega_{\frac{1}{2^{n+2}}}(f; Q)\chi_Q(x)
$$
for a.e. $x \in Q_0$.
\end{theorem}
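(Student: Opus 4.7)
The plan is to prove Theorem~\ref{LernerFormula} by an iterated Calderón--Zygmund stopping-time argument on $Q_0$: at each generation we select dyadic subcubes on which $f$ deviates too far from its local median, and the sparse family $\CS$ is the union of all such cubes. The proof rests on a single-cube inequality comparing $m_f(Q)$ to $\omega_\lambda(f;Q)$, a carefully tuned dyadic stopping time, and a telescoping argument along nested chains of cubes.

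\textbf{Single-cube estimate.} First I would verify that for every cube $Q$ and every $\lambda \in (0,1/2)$,
\begin{equation*}
\bigl|\{x \in Q : |f(x) - m_f(Q)| > 2\omega_\lambda(f; Q)\}\bigr| \leq \lambda|Q|.
\end{equation*}
This follows by choosing $c^{*}$ to be a near-minimiser of $((f-c)\chi_Q)^{*}(\lambda|Q|)$: the definition of the non-increasing rearrangement immediately gives $|\{x\in Q : |f(x)-c^{*}|>\omega_\lambda(f;Q)\}|\le\lambda|Q|$, so the complementary set has measure $>|Q|/2$, and the defining property of the median forces $|m_f(Q)-c^{*}|\le\omega_\lambda(f;Q)$. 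The triangle inequality then produces the factor $2$, which is exactly the one that appears in the conclusion.

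\textbf{Iterated stopping time.} Set $\lambda := 2^{-(n+2)}$, $\CF_0:=\{Q_0\}$, and given $\CF_k$ define, for each $Q \in \CF_k$, $B_Q := \{x\in Q : |f(x) - m_f(Q)| > 2\omega_\lambda(f;Q)\}$, which satisfies $|B_Q|\le\lambda|Q|$. Select the maximal dyadic subcubes $Q' \in \CD(Q)$ with $|Q'\cap B_Q|/|Q'| > 2^{-(n+1)}$, collect them into $\CF_{k+1}$, and let $\CS:=\bigcup_k\CF_k$. The threshold is tuned so that simultaneously (i) $\sum_{Q'}|Q'|\leq |B_Q|/2^{-(n+1)}\leq |Q|/2$, which gives sparsity with $E(Q):=Q\setminus\bigcup\{Q'\in\CF_{k+1}: Q'\subset Q\}$, and (ii) by dyadic maximality of the selection, the density of $B_Q$ in each chosen $Q'$ is at most $2^{n}\cdot 2^{-(n+1)}=1/2$, so $|Q'\cap B_Q^{c}|\geq |Q'|/2$. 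On this good half of $Q'$ we have $|f - m_f(Q)|\leq 2\omega_\lambda(f; Q)$, and since this set has measure at least $|Q'|/2$, the defining property of the median pins $m_f(Q')$ inside $[m_f(Q)-2\omega_\lambda(f;Q),\, m_f(Q)+2\omega_\lambda(f;Q)]$.

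\textbf{Telescoping.} For a.e.\ $x \in Q_0$ only finitely many cubes of $\CS$ contain $x$, since sparsity gives $|\bigcup \CF_k|\leq 2^{-k}|Q_0|$; let $Q^0 \supsetneq Q^1 \supsetneq \cdots \supsetneq Q^N$ be this chain, with $x \in E(Q^N)$. Combining the terminal estimate from (ii) with the median-transfer step,
\begin{align*}
|f(x) - m_f(Q_0)| &\leq |f(x) - m_f(Q^N)| + \sum_{k=0}^{N-1} |m_f(Q^{k+1}) - m_f(Q^k)|\\
&\leq 2\omega_\lambda(f;Q^N)+\sum_{k=0}^{N-1}2\omega_\lambda(f;Q^k) = 2\sum_{Q\in\CS}\omega_\lambda(f;Q)\,\chi_Q(x),
\end{align*}
which is the claimed inequality. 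The main obstacle is the bookkeeping of constants: the choices of $\lambda=2^{-(n+2)}$ and of the stopping-threshold $2^{-(n+1)}$ must be reconciled so that sparsity and the density bound $|Q'\cap B_Q|< |Q'|/2$ needed for the median transfer hold simultaneously, and a secondary technical point is that $m_f(Q)$ is only defined up to an interval, so the argument must be carried out uniformly in every admissible choice of median.
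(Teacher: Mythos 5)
The paper does not prove this statement; it cites it directly from [Hyt14], so there is no internal proof to compare your argument against. Your stopping-time outline is the standard one (Lerner's original, sharpened by Hytönen) and the bookkeeping of constants $\lambda = 2^{-(n+2)}$, threshold $2^{-(n+1)}$, factor $2$ is correct, as is the sparsity count $\sum_{Q'}|Q'|\le|B_Q|/2^{-(n+1)}\le|Q|/2$.

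There are, however, two points where the argument as written is not airtight. First, the median-transfer step: your strict selection criterion $|Q'\cap B_Q|/|Q'|>2^{-(n+1)}$ gives, via the parent, only the non-strict bound $|Q'\cap B_Q|\le|Q'|/2$. If equality holds, it is \emph{not} true that every admissible median $m_f(Q')$ lies in $[m_f(Q)-2\omega_\lambda(f;Q),\,m_f(Q)+2\omega_\lambda(f;Q)]$: take $f=2$ on half of $Q'$ and $f=10$ on the other half with $m_f(Q)=0$, $\omega_\lambda(f;Q)=1$; then $m_f(Q')=10$ is admissible but violates the transfer estimate. So the sentence ``the defining property of the median pins $m_f(Q')$ inside'' that interval is false for arbitrary admissible medians, and your remark that the argument should be ``carried out uniformly in every admissible choice'' is not achievable with the $>$-selection. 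Two clean fixes are available: either select with the non-strict criterion $|Q'\cap B_Q|/|Q'|\ge 2^{-(n+1)}$, so the parent bound becomes strict ($|Q'\cap B_Q|<|Q'|/2$) and the transfer then holds for \emph{any} admissible median; or keep the $>$-selection and note that since $|Q'\cap B_Q^c|\ge|Q'|/2$ the interval $[m_f(Q)-2\omega_\lambda,\,m_f(Q)+2\omega_\lambda]$ always contains at least one admissible median, which you then \emph{choose} (the intermediate medians do not appear in the conclusion, so you are free to pick them). Second, in the telescoping you use $|f(x)-m_f(Q^N)|\le 2\omega_\lambda(f;Q^N)$ for a.e.\ $x\in E(Q^N)$, but this needs a Lebesgue density argument that you do not supply: a.e.\ point of $B_{Q^N}$ is a dyadic density point and hence lies in some selected next-generation cube, so a.e.\ $x\in E(Q^N)$ lies in $B_{Q^N}^c$. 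Both are routine, but the second is a genuinely missing step and the first as phrased is a false claim in the edge case.
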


Di Plinio and Lerner \cite{DiPL} applied the above local mean oscillation estimate to $T^\Phi f$ to obtain an estimate for $\|T^\Phi f\|_{L^p(w)}$. Then, one is concerned with obtaining a bound for the local mean oscillation of $T^\Phi f$ on a cube $Q$.

\begin{proposition}[\cite{DiPL}]\label{oscillationProp}
Let $T^\Phi$ be a maximally modulated Calderón--Zygmund operator satisfying \eqref{wt}. Then, for any cube $Q \subset \R^d$ and any $1<r\leq r_0$,
\begin{equation}\label{oscillationMMCZO}
\omega_{\lambda}(T^\Phi f; Q) \lesssim \psi(r)\left(\frac{1}{|\bar Q|}\int_{\bar Q} |f|^r \right)^{1/r} + \sum_{m=0}^\infty \frac{1}{2^{m\delta}} \left( \frac{1}{|2^mQ|}\int_{2^mQ}|f| \right),
\end{equation}
where $\bar Q=2\sqrt{n}Q$.
\end{proposition}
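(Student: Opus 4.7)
The plan is to apply the classical local/global splitting of $f$ relative to $Q$, using the \emph{a priori} weak-type hypothesis \eqref{wt} on the local part and the smoothness condition (ii) on the kernel $K$ for the far part. Concretely, I would write $f = f_1 + f_2$ with $f_1 = f\chi_{\bar Q}$ and $f_2 = f\chi_{\R^n\setminus \bar Q}$. Since $T$ and the modulation $\mathcal{M}^{\phi_\alpha}$ are both linear, the pointwise triangle inequality $|T\mathcal{M}^{\phi_\alpha}f(x)| \leq |T\mathcal{M}^{\phi_\alpha}f_1(x)| + |T\mathcal{M}^{\phi_\alpha}f_2(x)|$, followed by taking the supremum in $\alpha$, gives $T^\Phi f \leq T^\Phi f_1 + T^\Phi f_2$. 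Applying the same trick to $f_2 = f - f_1$ yields the reverse-direction inequality $T^\Phi f_2 \leq T^\Phi f + T^\Phi f_1$, which I will need below. Letting $x_Q$ denote the centre of $Q$, the constant in the infimum defining $\omega_\lambda$ will be taken to be $c = T^\Phi f_2(x_Q)$ (finite a.e.\ for $f$ in a suitable dense class), and combining the previous two inequalities gives, for every $x \in Q$,
\[
|T^\Phi f(x) - c| \;\leq\; T^\Phi f_1(x) + |T^\Phi f_2(x) - T^\Phi f_2(x_Q)|.
\]

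For the local piece, Chebyshev's inequality together with \eqref{wt} applied to $f_1$ yields
\[
(T^\Phi f_1\chi_Q)^*(\lambda|Q|/2) \;\lesssim\; \frac{\psi(r)\|f_1\|_r}{(\lambda|Q|/2)^{1/r}} \;\lesssim\; \psi(r)\Big(\frac{1}{|\bar Q|}\int_{\bar Q}|f|^r\Big)^{1/r},
\]
absorbing $\lambda$ and the ratio $|\bar Q|/|Q|$ into the implicit constant. For the far piece, the key geometric observation is that for $x \in Q$ and $y \notin \bar Q = 2\sqrt{n}\,Q$ one has $|x - x_Q| \leq |x_Q - y|/2$, so condition (ii) on $K$ applies. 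Writing $T\mathcal{M}^{\phi_\alpha}f_2(x) - T\mathcal{M}^{\phi_\alpha}f_2(x_Q)$ as the integral of $(K(x,y) - K(x_Q,y))e^{2\pi i \phi_\alpha(y)}f(y)$ over $\R^n\setminus \bar Q$, taking absolute values, and decomposing $\R^n\setminus \bar Q$ into the dyadic shells $2^m\bar Q\setminus 2^{m-1}\bar Q$ produces a bound that is \emph{uniform} in $\alpha$; passing the supremum over $\alpha$ through exactly as in the opening paragraph then gives
\[
|T^\Phi f_2(x) - T^\Phi f_2(x_Q)| \;\lesssim\; \sum_{m=0}^{\infty} \frac{1}{2^{m\delta}}\,\frac{1}{|2^m Q|}\int_{2^m Q}|f|
\]
uniformly for $x \in Q$. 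Combining the two displays with the subadditivity $(g+h)^*(t) \leq g^*(t/2)+h^*(t/2)$ of non-increasing rearrangements yields \eqref{oscillationMMCZO}.

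The main obstacle is precisely the nonlinearity of $T^\Phi$ induced by the supremum over $\alpha$: one cannot work with the identity $T^\Phi(f_1+f_2) = T^\Phi f_1 + T^\Phi f_2$, but only with the two one-sided inequalities noted above; and in the far-part estimate the modulating factor $e^{2\pi i\phi_\alpha(y)}$ must be removed by an absolute-value bound that is uniform in $\alpha$ before the supremum is taken. The choice $c = T^\Phi f_2(x_Q)$ is exactly what allows the local weak-type bound and the smooth-kernel tail to be stitched together into a single oscillation estimate.
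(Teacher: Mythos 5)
Your proposal is correct and takes essentially the same route as the paper's own treatment (the scalar argument is reproduced verbatim for the vector-valued analogue in Proposition \ref{PropVVoscillation}): decompose $f=f\chi_{\bar Q}+f\chi_{\R^n\setminus\bar Q}$, handle the local piece via the weak-type hypothesis \eqref{wt} and the far piece via the H\"older regularity of $K$ and a dyadic-shell decomposition, with $c=T^\Phi\bigl(f\chi_{\R^n\setminus\bar Q}\bigr)(x_Q)$ in the oscillation. The only cosmetic difference is that the paper passes the supremum over $\alpha$ through in one step via the estimate $\bigl|\sup_\alpha|a_\alpha|-\sup_\alpha|b_\alpha|\bigr|\le\sup_\alpha|a_\alpha-b_\alpha|$, whereas you assemble the same pointwise bound on $|T^\Phi f(x)-c|$ from the two one-sided sublinearity inequalities; both routes are valid and amount to the same thing.
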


Given a sparse family $\mathcal{S}$, the above proposition suggests to consider the dyadic sparse operator
$$
\mathcal{A}_{r,\mathcal{S}}f(x)=\sum_{Q \in \mathcal{S}} \left(\frac{1}{|\bar Q|} \int_{\bar Q} |f|^r \right)^{1/r}\chi_Q(x).
$$
The following norm estimate result allows to deduce boundedness for the operator $T^\Phi$ from uniform boundedness on the dyadic grids $\mathcal{D}$ and the sparse families $\mathcal{S} \subset \mathcal{D}$ for the operator $\mathcal{A}_{r,\mathcal{S}}$.

\begin{proposition}[\cite{DiPL}]\label{L24}
Let $T^{\mathcal{F}}$ be a maximally modulated Calderón--Zygmund operator satisfying \eqref{wt}. Let $1<p<\infty$ and let $w$ be an arbitrary weight. Then
$$
\|T^{\Phi}f\|_{L^p(w)} \lesssim \inf_{1<r\leq r_0} \Big\{ \psi(r)\sup_{\mathcal{D},\mathcal{S}} \|\mathcal{A}_{r,\mathcal{S}}f\|_{L^p(w)} \Big\}
$$
where the supremum is taken over all dyadic grids $\mathcal{D}$ and all sparse families $\mathcal{S}\subset \mathcal{D}$.
\end{proposition}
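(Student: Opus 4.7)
The plan is to combine the pointwise control from Theorem~\ref{LernerFormula} with the oscillation bound in Proposition~\ref{oscillationProp}, and then absorb the tail terms into sparse operators on shifted dyadic grids.

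First I would reduce to the case where $f$ is bounded and compactly supported, by a standard density argument using the weak-type bound \eqref{wt}. Fix a dyadic grid $\mathcal{D}$ and a cube $Q_0 \in \mathcal{D}$ containing $\supp f$. Applying Theorem~\ref{LernerFormula} to $T^\Phi f$ on $Q_0$ yields a sparse family $\mathcal{S}\subset \mathcal{D}(Q_0)$ with
\[
|T^\Phi f(x) - m_{T^\Phi f}(Q_0)| \leq 2\sum_{Q\in \mathcal{S}} \omega_{1/2^{n+2}}(T^\Phi f;Q)\chi_Q(x)
\]
for a.e.\ $x\in Q_0$. Since \eqref{wt} implies $|\{|T^\Phi f|>\eps\}|<\infty$ for every $\eps>0$, the median $m_{T^\Phi f}(Q_0)$ tends to $0$ as $|Q_0|\to \infty$; by Fatou, it suffices to bound the $L^p(w)$ norm of the right-hand side uniformly in $Q_0$.

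Next, insert the estimate of Proposition~\ref{oscillationProp} with $\lambda = 1/2^{n+2}$ and split the result into two pieces:
\[
\psi(r)\sum_{Q\in \mathcal{S}}\left(\frac{1}{|\bar Q|}\int_{\bar Q}|f|^r\right)^{1/r}\!\chi_Q(x) \; + \; \sum_{m=0}^{\infty}\frac{1}{2^{m\delta}}\sum_{Q\in \mathcal{S}}\left(\frac{1}{|2^m Q|}\int_{2^m Q}|f|\right)\chi_Q(x).
\]
The first piece is precisely $\psi(r)\mathcal{A}_{r,\mathcal{S}}f(x)$, and contributes $\psi(r)\sup_{\mathcal{D},\mathcal{S}}\|\mathcal{A}_{r,\mathcal{S}}f\|_{L^p(w)}$ after taking $L^p(w)$ norms.

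The main obstacle is the tail piece, since the dilates $2^m Q$ are not dyadic in $\mathcal{D}$. Here I would invoke the three-shifted-grids lemma: there exist dyadic grids $\mathcal{D}_1,\ldots,\mathcal{D}_{3^n}$ such that every cube $R\subset\R^n$ is contained in some $P\in\mathcal{D}_j$ with $|P|\lesssim |R|$. Applying this to each $2^m Q$ and grouping those $Q\in\mathcal{S}$ whose dilates share a common dyadic ancestor, one bounds the tail pointwise by a finite sum of sparse operators $\mathcal{A}_{1,\mathcal{S}_j}f$ built over the grids $\mathcal{D}_j$, where the $\mathcal{S}_j$ inherit sparsity from $\mathcal{S}$ (with a dimension-only worsening of the sparsity constant). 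The geometric factor $\sum_m 2^{-m\delta}<\infty$ is absorbed into the implicit constant. Since $r\ge 1$, Jensen gives $\mathcal{A}_{1,\mathcal{S}_j}f\leq \mathcal{A}_{r,\mathcal{S}_j}f$ pointwise, so the tail is also controlled by $\sup_{\mathcal{D},\mathcal{S}}\|\mathcal{A}_{r,\mathcal{S}}f\|_{L^p(w)}$.

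Adding both contributions, letting $Q_0\nearrow \R^n$, and finally taking the infimum over $1<r\leq r_0$ yields the stated inequality. The principal technical point is the sparsity-transfer underlying the tail estimate; this is the now-standard shifting-grids device in dyadic analysis, but its careful bookkeeping is what requires real care in this maximally modulated setting.
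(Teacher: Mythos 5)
Your proposal reproduces the same architecture as the Di Plinio--Lerner argument the paper cites: apply the Hytönen refinement (Theorem~\ref{LernerFormula}) of Lerner's local mean oscillation decomposition to $T^\Phi f$ on a large cube $Q_0$, let the median disappear as $|Q_0|\to\infty$ via the weak-type hypothesis~\eqref{wt}, feed in the oscillation bound of Proposition~\ref{oscillationProp}, and then absorb the Calderón--Zygmund tail into sparse operators over shifted grids before taking the infimum in $r$. That is indeed the proof structure, so at the level of strategy you have it right.

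The one place you should be more careful is the tail, and it is precisely the step you gloss over. Your claim that ``the $\mathcal{S}_j$ inherit sparsity from $\mathcal{S}$ with a dimension-only worsening of the sparsity constant'' is far from immediate: the map $Q\mapsto R^m_Q$ sends a dyadic cube $Q\in\mathcal{S}$ to a cube in a shifted grid with $|R^m_Q|\sim 2^{mn}|Q|$, and the natural attempt to transfer the disjoint majorizing sets $E(Q)$, with $|E(Q)|\gtrsim|Q|$, produces sets that occupy only a $2^{-mn}$-fraction of $R^m_Q$, so sparsity in the usual $|E(R)|\gtrsim|R|$ sense does not simply ``inherit''. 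What does survive is a Carleson packing bound for the image family (or a domination after a further stopping-time/augmentation step), and making this uniform in $m$ --- or showing that the $m$-dependence is at worst polynomial, to be absorbed by $\sum_m 2^{-m\delta}$ --- is exactly the lemma that Di Plinio and Lerner import from Lerner's earlier work on domination of Calderón--Zygmund operators by dyadic positive operators. Your proof treats the shifted-grid reduction and the geometric series as independent ingredients, whereas in the actual argument they interact: the $2^{-m\delta}$ factor is what makes any $m$-dependent degradation harmless. So the proposal is not wrong, but the phrase ``inherit sparsity with a dimension-only worsening'' should be replaced by a precise statement of, and citation to, the sparsity/Carleson domination lemma for dilated sparse families; without it the proof has a genuine gap at its most delicate point.

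Two smaller remarks. First, you should also note (as the paper does implicitly) that the $m=0$ term of the tail is already dominated by the leading $\bar Q$-average term via Jensen, since $Q\subset\bar Q$; this keeps the bookkeeping clean. Second, the passage ``$|\{|T^\Phi f|>\eps\}|<\infty$ implies $m_{T^\Phi f}(Q_0)\to 0$'' is fine, but it relies on $f$ being, say, bounded with compact support; you do state that reduction at the outset, which is correct.
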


As we will see in Section \ref{proofthm}, Theorem \ref{FSmAthm} may be deduced from its analogous statement in the context of the dyadic sparse operators $\mathcal{A}_{r,\mathcal{S}}$.

\section{Bounds for the maximal operator}\label{maximal}

Let $B$ be a Young function. We define the complementary Young function $\bar B$ to be the Legendre transform of $B$, that is
$$
\bar B(t)=\sup_{s>0}\{st-B(s)\}, \:\:\:\:\: t>0.
$$
We have that $\bar B$ is also a Young function, and it satisfies
$$
t \leq B^{-1}(t)\bar B^{-1}(t)\leq 2t
$$
for $t>0$. For all functions $f,g$ and all cubes $Q\subset \R^n$, the following generalised Hölder's inequality holds,
$$
\frac{1}{|Q|}\int_{Q}f(x)g(x)dx \leq \|f\|_{B,Q}\|g\|_{\bar B, Q}.
$$
Pérez \cite{Pe95} characterised the Young functions $B$ such that $M_B$ is bounded in $L^p$ for $p>1$; such characterisation is commonly known as $B_p$ condition. He also established that condition to be equivalent to certain weighted inequalities for $M_B$ and related maximal operators.
\begin{theorem}[\cite{Pe95}]\label{PeCharac}
Let $1<p<\infty$. Let $A$ and $B$ be doubling Young functions satisfying $\bar B (t)=A(t^{p'})$. Then the following are equivalent:
\begin{enumerate}[(i)]
\item $B$ satisfies the $B_p$ condition, denoted by $B \in B_p$: there is a constant $c>0$ such that
$$
\int_c^\infty \frac{B(t)}{t^p}\frac{dt}{t} \approx \int_ c^\infty \left(\frac{t^{p'}}{\bar B (t)} \right)^{p-1} \frac{dt}{t}<\infty.
$$

\item There is a constant $c>0$ such that
$$
\int_c^\infty \left(\frac{t}{A(t)} \right)^{p-1}\frac{dt}{t}<\infty.
$$

\item There is a constant $C<\infty$ such that
$$
\int_{\R^n} M_Bf(x)^pdx \leq C \int_{\R^n} f(x)^pdx
$$
for all non-negative functions $f$.

\item There is a constant $C<\infty$ such that
$$
\int_{\R^n}M_Bf(x)^pu(x)dx \leq C \int_{\R^n}f(x)^p Mu(x)dx
$$
for all non-negative functions $f$ and any weight $u$.

\item There is a constant $C<\infty$ such that 
\begin{equation}\label{FSreverse}
\int_{\R^n} Mf(x)^p \frac{u(x)}{(M_Aw(x))^{p-1}}dx \leq C \int_{\R^n} f(x)^p \frac{Mu(x)}{w(x)^{p-1}}dx
\end{equation}
for all non-negative functions $f$ and any weights $u$, $w$.
\end{enumerate}
\end{theorem}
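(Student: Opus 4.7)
I would structure the argument so that (i) and (iii) form the central pair, with (ii) reduced to (i) by a change of variable and (iv), (v) deduced from (iii) by duality. Concretely I would prove (i)$\Leftrightarrow$(ii), then the core equivalence (i)$\Leftrightarrow$(iii), then (iii)$\Leftrightarrow$(iv), and finally (iv)$\Leftrightarrow$(v).

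The equivalence (i)$\Leftrightarrow$(ii) is obtained by the substitution $s = t^{p'}$ in the integral of (ii), combined with $\bar B(t) = A(t^{p'})$ and $(p-1)(p'-1) = 1$: this converts it into the right-most integral in (i) up to an inessential multiplicative constant. The equivalence of the two integrals inside (i) itself is a standard Young-function manipulation using the same kind of substitution applied to $\bar B$.

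The heart of the theorem is (i)$\Rightarrow$(iii). I would run a Calderón-Zygmund stopping-time decomposition adapted to the Luxemburg functional $\|f\|_{B,Q}$: for each $\lambda > 0$ select a maximal family of pairwise disjoint dyadic cubes $\{Q_j^\lambda\}$ with $\|f\|_{B,Q_j^\lambda} > \lambda$, covering $\{M_B f > \lambda\}$ up to a fixed dilation. The defining inequality of the Luxemburg norm yields $|Q_j^\lambda| \le \int_{Q_j^\lambda} B(|f|/\lambda)\,dx$, whence
\[
\bigl|\{M_B f > \lambda\}\bigr| \lesssim \int_{\R^n} B(|f(x)|/\lambda)\,dx.
\]
Writing $\|M_Bf\|_{L^p}^p = p\int_0^\infty \lambda^{p-1}|\{M_Bf>\lambda\}|\,d\lambda$, swapping the order of integration, and performing the substitution $t = |f(x)|/\lambda$ then produces
\[
\|M_B f\|_{L^p}^p \lesssim \Bigl(\int_c^\infty \frac{B(t)}{t^p}\frac{dt}{t}\Bigr)\int_{\R^n}|f|^p\,dx,
\]
finite precisely by (i). For (iii)$\Rightarrow$(i) I would test against a carefully chosen rearrangement built from $B^{-1}$ on a nested family of cubes, so that the ratio $\|M_B f\|_{L^p}/\|f\|_{L^p}$ encodes the $B_p$ integral; finiteness of the operator norm then forces the integral condition.

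The remaining passages are duality arguments. (iv)$\Rightarrow$(iii) is immediate upon choosing $u\equiv 1$. For (iii)$\Rightarrow$(iv), I would implement the Fefferman--Stein scheme: dualising the $L^p(u)$-norm on the left against $L^{p'}(u^{1-p'})$ on the right and iterating with a pointwise comparison between Luxemburg averages under level-set decompositions, together with the generalised Hölder inequality, allows the extra weight to be absorbed by $Mu$ on the right, the core $L^p$-boundedness being supplied by (iii). Finally, (iv)$\Leftrightarrow$(v) follows from the substitutions $f \mapsto f\,w^{-1/p'}$, $u \mapsto u\,w^{1-p}$ combined with $\bar B(t) = A(t^{p'})$, which swaps $M_A$-averages of $w$ for $M_B$-averages after an appropriate power transformation and makes the two inequalities formally dual. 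The main obstacle is the stopping-time implication (i)$\Rightarrow$(iii): the Calderón--Zygmund machinery must be tailored to the implicitly-defined Luxemburg functional, and the Fubini exchange has to produce \emph{exactly} the $B_p$ integral with the correct exponent; once this cornerstone is in place, the remaining implications reduce to standard manoeuvres with the generalised Hölder inequality.
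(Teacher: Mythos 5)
The paper does not actually prove this theorem; it quotes it from Pérez \cite{Pe95}, so there is no in-paper argument to compare against, and I am evaluating your plan against the standard argument. Your overall architecture (change of variables for (i)$\Leftrightarrow$(ii), Calderón--Zygmund stopping time for (i)$\Leftrightarrow$(iii), generalised Hölder and pointwise comparison for (iv)$\Leftrightarrow$(v)) is the right one, and the (iv)$\Leftrightarrow$(v) mechanism you sketch --- $Mf(x)\lesssim M_B(fw^{-1/p'})(x)\,(M_Aw(x))^{1/p'}$ coming from $\|w^{1/p'}\|_{\bar B,Q}=\|w\|_{A,Q}^{1/p'}$ --- is correct. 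But there is a genuine gap in your central step (i)$\Rightarrow$(iii). Your weak-type bound $|\{M_Bf>\lambda\}|\lesssim\int_{\R^n}B(|f|/\lambda)\,dx$ is fine, yet when you insert it into $\|M_Bf\|_p^p=p\int_0^\infty\lambda^{p-1}|\{M_Bf>\lambda\}|\,d\lambda$, swap the order of integration, and substitute $t=|f(x)|/\lambda$, the inner integral in $\lambda$ runs over \emph{all} of $(0,\infty)$, so what you actually obtain is
$$
\|M_Bf\|_p^p\lesssim\left(\int_0^\infty\frac{B(t)}{t^{p}}\,\frac{dt}{t}\right)\|f\|_p^p,
$$
not the truncated integral $\int_c^\infty$ you quote. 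The $B_p$ condition controls only the tail at infinity; near $0$ the integrand need not be integrable (for instance $B(t)\sim t^{q}$ near $0$ with $1<q\le p$ gives $\int_0^1 B(t)\,t^{-p-1}\,dt=\infty$, and this is perfectly compatible with $B\in B_p$). To close the gap you must first split $f=f\chi_{\{|f|>\lambda/2\}}+f\chi_{\{|f|\le\lambda/2\}}$ (after normalising $B^{-1}(1)=1$, say) and observe that the small part has Luxemburg norm at most $\lambda/2$ on every cube, so the stopping cubes in fact satisfy $\|f\chi_{\{|f|>\lambda/2\}}\|_{B,Q_j}>\lambda/2$; then $|\{M_Bf>\lambda\}|\lesssim\int_{\{|f|>\lambda/2\}}B(2|f|/\lambda)\,dx$, Fubini now restricts to $\lambda<2|f(x)|$, i.e.\ $t>1$, and you get exactly $\int_1^\infty B(t)t^{-p-1}\,dt$.

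A secondary concern is your route (iii)$\Rightarrow$(iv) by ``dualising the $L^p(u)$-norm'': $M_B$ is sublinear, not linear, so there is no adjoint to dualise against, and the classical Fefferman--Stein proof is not a duality argument. The natural and clean passage is to prove (i)$\Rightarrow$(iv) directly by re-running the same (truncated) stopping-time argument with weights: on a stopping cube $Q_j$ one has $u(Q_j)/|Q_j|\le\inf_{Q_j}Mu$, hence $u(Q_j)<\big(\inf_{Q_j}Mu\big)\int_{Q_j}B(2|f|/\lambda)\,dx$, which yields $u(\{M_Bf>\lambda\})\lesssim\int_{\{|f|>\lambda/2\}}B(2|f|/\lambda)\,Mu\,dx$; the same Fubini then gives (iv), and (iii) follows by taking $u\equiv1$. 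This also removes the need for (iii)$\Rightarrow$(iv) as an independent implication. Finally, a very small slip: in (i)$\Leftrightarrow$(ii) the substitution should be $t=s^{p'}$ (equivalently $s=t^{1/p'}$), not $s=t^{p'}$, so that $A(t)=A(s^{p'})=\bar B(s)$; your direction is reversed, though the idea is of course right.
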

A classical result from Coifman and Rochberg \cite{CR} asserts that for any locally integrable function $w$ such that $Mw(x)<\infty$ a.e. and $0<\delta<1$, the function $(Mw)^{\delta}(x)$ is an $A_1$ weight with constant independent of $w$. More precisely,
$$
M\left((Mw)^{\delta}\right)(x) \leq C_n\frac{1}{1-\delta}(Mw)^{\delta}(x)
$$
for almost all $x \in \R^n$. As Pérez remarks in \cite{Pe94}, the function $(M_Aw)^\delta$ still continues to be an $A_1$ weight for any Young function $A$. Proceeding as Coifman and Rochberg, one may indeed see that the $A_1$ constant is independent of the Young function $A$; we leave the proof to the interested reader.
\begin{proposition}\label{Coifman}
Let $A$ be a Young function. If $0<\delta<1$, then $(M_Aw)^{\delta}$ is an $A_1$ weight with $A_1$ constant independent of $w$. In particular,
$$
M\left((M_Aw)^{\delta}\right)(x) \leq C_{n}\frac{1}{1-\delta}(M_Aw)^{\delta}(x)
$$
for almost all $x \in \R^n$.
\end{proposition}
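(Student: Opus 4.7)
The plan is to adapt the classical Coifman-Rochberg argument for the Hardy-Littlewood maximal operator $M$ to the Orlicz maximal operator $M_A$, with the doubling hypothesis on $A$ playing the role that the trivial doubling of $M$ plays in the standard proof. The two assertions of the proposition are equivalent at Lebesgue points of $(M_A w)^{\delta}$, so it suffices to establish the averaged $A_1$-type inequality
\begin{equation*}
\frac{1}{|Q|}\int_Q (M_A w)^{\delta}(x)\,dx \leq \frac{C_n}{1-\delta}\Big(\inf_Q M_A w\Big)^{\delta}
\end{equation*}
for every cube $Q$, with the constant depending only on $n$, $\delta$ and the doubling constant of $A$.

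Fix $Q$ and split $w = w_1 + w_2$ with $w_1 = w\chi_{2Q}$. Since $0<\delta<1$, $(M_A w)^{\delta} \leq (M_A w_1)^{\delta} + (M_A w_2)^{\delta}$, and I would treat each piece separately. For the far part $w_2$, a geometric argument shows that any cube $R$ containing a point of $Q$ and intersecting $\supp(w_2) = \R^n \setminus 2Q$ must have sidelength $\ell(R) \gtrsim \ell(Q)$, so a fixed dilate $R^{*}$ of $R$ (with $|R^{*}| \sim |R|$, the dilation factor depending only on $n$) contains all of $Q$. The doubling of $A$, together with the definition of the Luxemburg norm, yields $\|w_2\|_{A,R} \leq C_n \|w\|_{A,R^{*}}$; hence $M_A w_2(x) \leq C_n M_A w(y)$ uniformly for $x,y \in Q$, and in particular $(M_A w_2)^{\delta}(x) \leq C_n (\inf_Q M_A w)^{\delta}$. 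For the local part $w_1$, the key ingredient is a Kolmogorov-type inequality
\begin{equation*}
\frac{1}{|Q|}\int_Q (M_A w_1)^{\delta}(x)\,dx \leq \frac{C_n^{\delta}}{1-\delta}\,\|w_1\|_{A,2Q}^{\delta},
\end{equation*}
which combined with the trivial bound $\|w_1\|_{A,2Q} \leq \inf_{y \in Q} M_A w(y)$ (valid because $2Q$ is a cube containing every $y \in Q$) closes the argument.

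The main obstacle is establishing this Kolmogorov-type bound. For $A(t)=t$ it reduces to the classical Kolmogorov inequality obtained by integrating the weak-type $(1,1)$ distributional estimate against $\delta\lambda^{\delta-1}\,d\lambda$. For general doubling $A$, the analogous endpoint estimate
\begin{equation*}
|\{x \in Q : M_A w_1(x) > \lambda\}| \leq C_n \frac{|Q|\,\|w_1\|_{A,2Q}}{\lambda}, \qquad \lambda > \|w_1\|_{A,2Q},
\end{equation*}
would be proved by a Calderón-Zygmund stopping-time decomposition at level $\lambda$ with respect to the Luxemburg averages $R \mapsto \|w_1\|_{A,R}$: the doubling of $A$ guarantees that the maximal stopping cubes are pairwise disjoint and that their total measure is controlled by $C_n |Q|\|w_1\|_{A,2Q}/\lambda$, via a standard argument on dyadic Orlicz averages transferred to arbitrary cubes by the usual translation-shifted grid trick. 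Integrating this distributional inequality over $\lambda \in (\|w_1\|_{A,2Q},\infty)$ against $\delta\lambda^{\delta-1}\,d\lambda$ produces the singular factor $(1-\delta)^{-1}$, while the interval $\lambda \in (0,\|w_1\|_{A,2Q})$ contributes only the harmless term $|Q|\|w_1\|_{A,2Q}^{\delta}$ from the trivial bound; combining the two yields the required Kolmogorov inequality and completes the proof.
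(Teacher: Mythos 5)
The paper does not prove this proposition; it quotes it from Cruz-Uribe, Martell and P\'erez (\cite{CUMP}, Proposition~5.32), so there is no internal proof to compare against. Your outline is the natural adaptation of the Coifman--Rochberg argument and is correct in structure: the split $w=w\chi_{2Q}+w\chi_{\R^n\setminus 2Q}$, the geometric control of the far part, and a Kolmogorov inequality for the near part are exactly the right ingredients, and the constant $(1-\delta)^{-1}$ does come out of integrating a weak-type bound against $\delta\lambda^{\delta-1}\,d\lambda$.

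Two remarks. First, the doubling hypothesis on $A$ is not needed and does not appear in the statement; every place you invoke it can be replaced by the elementary consequence of convexity with $A(0)=0$, namely $A(\theta t)\le\theta A(t)$ for $0<\theta<1$. This is what gives both the comparison of Luxemburg averages over nested cubes of comparable measure (if $R\subset R^{*}$ then $\|f\|_{A,R}\le\frac{|R^*|}{|R|}\|f\|_{A,R^*}$) that you need for the far part, and the decay in $\lambda$ in the endpoint estimate. Second, the stopping-time construction for the weak-type bound on $M_A$ is more machinery than necessary. The definition of the Luxemburg norm gives the pointwise equivalence
$$
M_A f(x)>\lambda \quad \Longleftrightarrow \quad M\bigl(A(|f|/\lambda)\bigr)(x)>1,
$$
so the endpoint estimate for $M_A$ is an immediate consequence of the weak-$(1,1)$ bound for the Hardy--Littlewood maximal operator applied to $A(|w_1|/\lambda)$. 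Combined with $A(|w_1|/\lambda)\le\frac{\mu}{\lambda}A(|w_1|/\mu)$ for $\lambda>\mu:=\|w_1\|_{A,2Q}$ and the modular bound $\frac{1}{|2Q|}\int_{2Q}A(|w_1|/\mu)\le1$, this yields $|\{x: M_Aw_1(x)>\lambda\}|\le C_n\,2^n\frac{\mu}{\lambda}|Q|$ directly, with no Calder\'on--Zygmund decomposition, no shifted grids, and no doubling. With these simplifications the proof is complete and valid for arbitrary Young functions.
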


\section{Proof of Theorems \ref{FSmczo} and \ref{FSmAthm}} \label{proofthm}

In this section we give a proof of Theorem \ref{FSmAthm} and we use it, thanks to an observation due to Pérez \cite{Pe94,Pe95}, to deduce Theorem \ref{FSmczo}. Our proof follows a similar pattern of a proof of Di Plinio and Lerner in \cite{DiPL}.

As seen in Section \ref{sec:dyadic}, the boundedness of $T^{\Phi}$ may be essentially reduced to the uniform boundedness of the dyadic sparse operators $\mathcal{A}_{r,\mathcal{S}}$. In particular, we have the following Fefferman--Stein inequality for $\mathcal{A}_{r,\mathcal{S}}$.

\begin{theorem}\label{SparseBound}
Let $\mathcal{D}$ be a dyadic grid and $\mathcal{S} \subset \mathcal{D}$ a sparse family of cubes. Suppose that $A$ is a Young function satisfying \eqref{Apcondition}.Then for $1<p<\infty$, there is a constant $C_{n,p,A}<\infty$ independent of $\mathcal{S}$, $\mathcal{D}$ and the weight $w$ such that
$$
\|\mathcal{A}_{r,\mathcal{S}}f\|_{L^p(w)} \leq C_{n,p,A}\left(\Big(\frac{p+1}{2r}\Big)' \right)^{1/r} \|f\|_{L^p(M_Aw)}
$$
holds for any $1<r<\frac{p+1}{2}$ and any non-negative function $f$.
\end{theorem}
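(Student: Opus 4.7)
The plan is to combine duality, the sparseness of $\mathcal{S}$, and weighted maximal function estimates via the Orlicz-Young function machinery of Pérez, following the general pattern of Di Plinio and Lerner's argument in \cite{DiPL}. By $L^p(w)$-$L^{p'}(w)$ duality,
\begin{equation*}
\|\mathcal{A}_{r,\mathcal{S}}f\|_{L^p(w)} = \sup \sum_{Q \in \mathcal{S}}\left(\frac{1}{|\bar Q|}\int_{\bar Q}f^r\right)^{1/r} \int_Q gw,
\end{equation*}
where the supremum is over $g \geq 0$ with $\|g\|_{L^{p'}(w)} = 1$, so it suffices to estimate this bilinear expression.

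Using the sparseness $|Q|\leq 2|E(Q)|$ and the disjointness of $\{E(Q)\}_{Q \in \mathcal{S}}$, together with the pointwise inequalities $\bigl(\frac{1}{|\bar Q|}\int_{\bar Q}f^r\bigr)^{1/r} \leq M_r f(x)$ for $x \in \bar Q$ and $\frac{1}{|Q|}\int_Q gw \leq M(gw)(x)$ for $x \in Q$, the bilinear sum is majorised by $2\int_{\R^n} M_r f \cdot M(gw)\,dx$. Hölder's inequality with exponents $(p, p')$ and the splitting $M_A w = (M_A w)^{1/p}(M_A w)^{1/p'}$ then yields
\begin{equation*}
\int M_r f \cdot M(gw)\,dx \leq \|M_r f\|_{L^p(M_A w)} \cdot \|M(gw)\|_{L^{p'}((M_A w)^{1-p'})}.
\end{equation*}

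The second factor is controlled by Theorem \ref{PeCharac}(v) with the roles of $p$ and $p'$ interchanged; our hypothesis \eqref{Apcondition} is exactly the $p \leftrightarrow p'$ version of condition (ii) of that theorem. Taking $u = 1$ in (v) yields $\int M(gw)^{p'}(M_A w)^{1-p'}\,dx \leq C \int g^{p'}w\,dx$, so this factor is a constant. The first factor relies on Proposition \ref{Coifman}: the $A_1$-property of $(M_A w)^\delta$ for $\delta < 1$ (with constant independent of $w$) allows one to apply the classical Fefferman-Stein inequality \eqref{FSineq} to $M_r$ at the critical exponent $(p+1)/2$; the hypothesis $r < (p+1)/2$ is precisely what is needed for $M$ (and hence $M_r = (M(\cdot)^r)^{1/r}$) to be bounded on $L^{(p+1)/(2r)}$, with Fefferman-Stein constant $((p+1)/(2r))'^{1/r}$ after taking the $1/r$-th root. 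This is exactly the constant appearing in the theorem.

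The main obstacle is establishing the first-factor bound with the precise constant: a direct Muckenhoupt argument is unavailable because $M_A w$ is not in general an $A_{p/r}$-weight. A careful power-trick, exploiting the $A_1$-property of $(M_A w)^\delta$ for $\delta$ approaching $1$ together with the explicit $L^{(p+1)/2}$-boundedness of $M_r$, is required to transfer the $L^p((M_A w)^\delta)$-type bound to $L^p(M_A w)$ while preserving the advertised dependence $((p+1)/(2r))'^{1/r}$.
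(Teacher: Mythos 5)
Your high-level toolkit is the right one — Pérez's Theorem \ref{PeCharac} (with $p$ and $p'$ interchanged), Proposition \ref{Coifman}, and the sparseness of $\mathcal{S}$ — and your treatment of the second factor $\|M(gw)\|_{L^{p'}((M_Aw)^{1-p'})}$ is correct. But your decomposition commits too early to the pointwise bound $(\frac{1}{|\bar Q|}\int_{\bar Q}f^r)^{1/r}\leq M_rf$, and the first factor it produces, $\|M_rf\|_{L^p(M_Aw)}\lesssim\|f\|_{L^p(M_Aw)}$, is not merely hard to establish with the advertised constant: it is \emph{false} in general. The inequality amounts to $M$ being bounded on $L^{p/r}(M_Aw)$, but $M_Aw$ need not lie in any $A_q$ class. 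Already for $w=\chi_{[0,1]}$ one has $Mw(x)\approx(1+|x|)^{-1}$, and testing $f=\chi_{[R,2R]}$ gives $\|f\|_{L^{s}(Mw)}^{s}\approx\log 2$ while $\|Mf\|_{L^{s}(Mw)}^{s}\gtrsim\log R$, so no finite constant works. The ``power trick'' you propose cannot rescue this: Proposition \ref{Coifman} gives $(M_Aw)^{\delta}\in A_1$ only for $\delta<1$, with constant $\approx(1-\delta)^{-1}$, and letting $\delta\uparrow1$ loses all control precisely because $M_Aw$ itself is not $A_1$. Also note an internal mismatch: the exponent in your first factor is $p/r$, not $(p+1)/(2r)$, so the threshold $r<(p+1)/2$ does not actually enter your argument at the place you cite it.

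The paper sidesteps this obstruction with a structurally different decomposition. It linearizes $\mathcal{A}_{r,\mathcal{S}}$ via functions $g_Q$ normalized in $L^{r'}(\bar Q)$, so the target is an $L^p(w)\to L^p(M_Aw)$ bound for a genuinely linear $L$; by duality this becomes $\|L^*h\|_{L^{p'}((M_Aw)^{1-p'})}\lesssim\|h\|_{L^{p'}(w^{1-p'})}$, and applying \eqref{FSreverse} (i.e.\ Theorem \ref{PeCharac}(v) with $p\to p'$, $u\equiv1$) reduces this further to $\|L^*h\|_{L^{p'}((M_Aw)^{1-p'})}\lesssim\|Mh\|_{L^{p'}((M_Aw)^{1-p'})}$. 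The crucial move in estimating $\langle h,L\eta\rangle$ is to split the local average as
\begin{equation*}
\Bigl(\frac{1}{|\bar Q|}\int_{\bar Q}h\Bigr)=\Bigl(\frac{1}{|\bar Q|}\int_{\bar Q}h\Bigr)^{\frac{r}{p+1}}\Bigl(\frac{1}{|\bar Q|}\int_{\bar Q}h\Bigr)^{\frac{p}{p+1}}
\end{equation*}
and to pull the first factor \emph{inside} the $L^r$-average of $\eta$, producing $M_r((Mh)^{\frac{1}{p+1}}\eta)$. Hölder at exponents $(p+1)/2$ and $(p+1)/(p-1)$ with weights $(M_Aw)^{\pm1/2}$, the classical Fefferman--Stein inequality at the exponent $(p+1)/(2r)$ (this is exactly where $r<(p+1)/2$ and the constant $((\tfrac{p+1}{2r})')^{1/r}$ enter), Proposition \ref{Coifman} applied to $(M_Aw)^{1/2}$, and a final Hölder recover the two pieces $\|Mh\|^{p/(p+1)}$ and $\|Mh\|^{1/(p+1)}$, which multiply to the full $\|Mh\|_{L^{p'}((M_Aw)^{1-p'})}$. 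This bootstrapping — keeping the average around until it can be redistributed between the two halves of the bilinear form — is exactly the idea missing from your decomposition, and is what avoids ever needing $M$ to be bounded on $L^{p/r}(M_Aw)$.
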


\begin{proof}
We first linearise the operator $\mathcal{A}_{r,\mathcal{S}}$. For any $Q$, by $L^p$ duality, there exists $g_Q$ supported in $\bar Q$ such that $\frac{1}{|\bar Q|}\int_{\bar Q} g^{r'}_Q=1$ and
$$
\left(\frac{1}{|\bar Q|} \int_{\bar Q}f^r \right)^{1/r}=\frac{1}{|\bar Q|}\int_{\bar Q}fg_Q.
$$
Of course the sequence of functions $\{g_Q\}_Q$ depends on the function $f$. Given such a sequence, we can define a linear operator $L_{f}$ by
$$
L_{f}h(x)=\sum_{Q \in \mathcal{S}} \left(\frac{1}{|\bar Q|}\int_{\bar Q} hg_Q \right)\chi_Q(x).
$$
Note that evaluating in $f$ one recovers $\CA_{r,\mathcal{S}}f$, that is $L_f(f)=\mathcal{A}_{r,\mathcal{S}}f$. Then, in order to obtain an estimate for $\|\mathcal{A}_{r,\mathcal{S}}\|_{L^p(w)}$ independent of $\mathcal{S}$ and $\mathcal{D}$, it is enough to obtain the corresponding estimate for $\|L_f h\|_{L^p(w)}$ uniformly in the functions $g_Q$. For ease of notation we remove the dependence of $f$ in $L_f$. By duality, the estimate
$$
\|Lh\|_{L^p(w)}\leq C_{n,p,A}\left(\Big(\frac{p+1}{2r}\Big)' \right)^{1/r} \|h\|_{L^p(M_Aw)}
$$
is equivalent to
\begin{equation}\label{dual}
\|L^*h\|_{L^{p'}((M_Aw)^{1-p'})} \leq C_{n,p,A}\left(\Big(\frac{p+1}{2r}\Big)' \right)^{1/r} \|h\|_{L^{p'}(w^{1-p'})}
\end{equation}
where $L^*$ denotes the $L^2(\R^n)$-adjoint operator of $L$. Since $A$ satisfies \eqref{Apcondition}, one can apply Theorem \ref{PeCharac} with $p$ replaced by $p'$. Using \eqref{FSreverse} with $u\equiv1$, the estimate \eqref{dual} follows from
\begin{equation}\label{ImportantIneq}
\|L^*h\|_{L^{p'}((M_Aw)^{1-p'})} \leq C_n\left(\Big(\frac{p+1}{2r}\Big)' \right)^{1/r} \|Mh\|_{L^{p'}((M_Aw)^{1-p'})}.
\end{equation}
We focus then on obtaining \eqref{ImportantIneq}. By duality, there exists $\eta \geq 0$ such that $\|\eta\|_{L^p(M_Aw)}=1$ and
$$
\|L^*h\|_{L^{p'}((M_Aw)^{1-p'})} =\int_{\R^n}L^*(h)\eta =\int_{\R^n}hL\eta .
$$
By Hölder's inequality and the $L^{r'}$ boundedness of $g_Q$,
\begin{align}
\int_{\R^n}hL\eta &=\sum_{Q \in \mathcal{S}} \left( \frac{1}{|\bar Q|}\int_{\bar Q} \eta g_Q \right)\int_Q h 
\leq \sum_{Q \in \mathcal{S}} \left( \frac{1}{|\bar Q|}\int_{\bar Q} \eta^r \right)^{1/r} \int_Q h \nonumber
\\
&\leq  \sum_{Q \in \mathcal{S}}  \left( \frac{1}{|\bar Q|}\int_{\bar Q}\eta^r\right)^{1/r}\left( \frac{1}{|\bar Q|}\int_{\bar Q}h\right)(2\sqrt{n})^n |Q| \nonumber
\\
&=(2\sqrt{n})^n \sum_{Q \in \mathcal{S}} \left(\frac{1}{|\bar Q|}\int_{\bar Q} \eta^r \left( \frac{1}{|\bar Q|}\int_{\bar Q}h \right)^{\frac{r}{p+1}} \right)^{1/r} \left( \frac{1}{|\bar Q|}\int_{\bar Q}h\right)^{\frac{p}{p+1}} |Q|. \label{I}
\end{align}
Recall that by definition of the Hardy--Littlewood maximal operator
\begin{equation}\label{HLpointwise}
\frac{1}{|\bar Q|} \int_{\bar Q} h(x)dx \leq Mh(y)
\end{equation}
holds for every $y \in \bar Q$. Combining this and the sparseness of $\mathcal{S}$
\begin{align}
\eqref{I} &\leq 2(2\sqrt{n})^{n} \sum_{Q \in \mathcal{S}} \left(\frac{1}{|\bar Q|} \int_{\bar Q} \left((Mh)^{\frac{1}{p+1}}\eta \right)^r \right)^{1/r} \left( \frac{1}{|\bar Q|}\int_{\bar Q}h\right)^{\frac{p}{p+1}} |E(Q)| \nonumber
\\
&\leq 2(2\sqrt{n})^{n} \sum_{Q \in \mathcal{S}} \int_{E(Q)} M_r((Mh)^{\frac{1}{p+1}}\eta)(Mh)^{\frac{p}{p+1}} \nonumber
\\
&\leq 2(2\sqrt{n})^{n} \int_{\R^n} M_r((Mh)^{\frac{1}{p+1}}\eta)(Mh)^{\frac{p}{p+1}}, \label{II}
\end{align}
where we have used that $(E(Q))_{Q \in \mathcal{S}}$ are pairwise disjoint and that \eqref{HLpointwise} also holds for $y \in E(Q)\subseteq Q \subseteq \bar Q$. By Hölder's inequality with exponents $\rho=\frac{p+1}{2}$ and $\rho'=\frac{p+1}{p-1}$,
\begin{align}
\eqref{II} &=2(2\sqrt{n})^{n}\int_{\R^n} M_r((Mh)^{\frac{1}{p+1}}\eta) (M_Aw)^{\frac{1}{p+1}} (Mh)^{\frac{p}{p+1}} (M_Aw)^{-\frac{1}{p+1}} \nonumber
\\
&\leq 2(2\sqrt{n})^{n}\|M_r((Mh)^{\frac{1}{p+1}}\eta)\|_{L^{\frac{p+1}{2}}((M_Aw)^{1/2})} \|Mh\|^{\frac{p}{p+1}}_{L^{p'}((M_Aw)^{1-p'})}. \label{product}
\end{align}
For $r<\frac{p+1}{2}$, we can apply the classical Fefferman--Stein inequality \eqref{FSineq} to the first term in \eqref{product}
\begin{align*}
\|M_r((Mh)^{\frac{1}{p+1}}\eta)&\|_{L^{\frac{p+1}{2}}((M_Aw)^{1/2})} \\
& \leq C_n\left(\Big(\frac{p+1}{2r}\Big)' \right)^{1/r} \|(Mh)^{\frac{1}{p+1}}\eta\|_{L^{\frac{p+1}{2}}(M((M_Aw)^{1/2}))},
\end{align*}
and by Proposition \ref{Coifman}
$$
\|(Mh)^{\frac{1}{p+1}}\eta\|_{L^{\frac{p+1}{2}}(M((M_Aw)^{1/2}))} \leq C_n\|(Mh)^{\frac{1}{p+1}}\eta\|_{L^{\frac{p+1}{2}}((M_Aw)^{1/2})}.
$$
Finally, by an application of Hölder's inequality with $\rho=2p'$ and $\rho'=\frac{2p}{p+1}$
\begin{align*}
\|(Mh)^{\frac{1}{p+1}}\eta\|_{L^{\frac{p+1}{2}}((M_Aw)^{1/2})}&= \left( \int_{\R^n} \left( (Mh)^{\frac{1}{2}}(M_Aw)^{-\frac{1}{2p}} \right) \left( \eta^{\frac{p+1}{2}} (M_Aw)^{\frac{p+1}{2p}} \right) \right)^{\frac{2}{p+1}}
\\
&\leq \|Mh\|_{L^{p'}((M_Aw)^{1-p'})}^{\frac{1}{p+1}} \|\eta\|_{L^p(M_Aw)} \\
&=\|Mh\|_{L^{p'}((M_Aw)^{1-p'})}^{\frac{1}{p+1}},
\end{align*}
where the last equality holds since $\|\eta\|_{L^p(M_Aw)}=1$. Altogether,
$$
\|L^*h\|_{L^{p'}((M_Aw)^{1-p'})} \leq 2(2\sqrt{n})^n C_n\left(\Big(\frac{p+1}{2r}\Big)' \right)^{1/r} \|Mh\|_{L^{p'}((M_Aw)^{1-p'})}.
$$
This concludes the proof.
\end{proof}

We are now able to prove Theorem \ref{FSmAthm}.

\begin{proof}[Proof of Theorem \ref{FSmAthm}]
By Proposition \ref{L24}, it is enough to show that for any $1<p<\infty$,
\begin{equation*}
\inf_{1<r\leq r_0} \Big\{ \psi(r)\sup_{\mathcal{D},\mathcal{S}} \|\mathcal{A}_{r,\mathcal{S}}f\|_{L^p(w)} \Big\} \lesssim \|f\|_{L^p(M_Aw)}.
\end{equation*}
By Theorem \ref{SparseBound},
\begin{equation*}
\sup_{\mathcal{D}, \mathcal{S}}\|\mathcal{A}_{r,\mathcal{S}}f\|_{L^p(w)} \leq C_{n,p,A}\left(\Big(\frac{p+1}{2r}\Big)' \right)^{1/r} \|f\|_{L^p(M_Aw)}
\end{equation*}
for any $1<r<\frac{p+1}{2}$, since the bound was independent of $\mathcal{D}$, $\mathcal{S}$.

For every $p>1$, consider
$$
r_p=\min\Big\{r_0,1+\frac{p-1}{3}\Big\}=\min \Big\{r_0,\frac{p+2}{3}\Big\}.
$$
We have that $1<r_p\leq r_0$ and $r_p< \frac{p+1}{2}$. Then
$$
\|T^{\Phi}f\|_{L^p(w)} \lesssim \psi(r_p)\sup_{\mathcal{D},\mathcal{S}} \|\mathcal{A}_{r_p,\mathcal{S}}f\|_{L^p(w)} \leq \psi(r_p)C_{n,p,A}\left(\Big(\frac{p+1}{2r_p}\Big)' \right)^{1/r_p} \|f\|_{L^p(M_Aw)}.
$$
\end{proof}

Observe that the proof of Theorem \ref{FSmAthm} may be extended to other operators whose bounds depend in a suitable way on those of $\mathcal{A}_{r,\mathcal{S}}$. This will be the case of the vector-valued extension presented in Section \ref{sec:VVextensions}.

Theorem \ref{FSmczo} may be deduced from Theorem \ref{FSmAthm}. Given the specific Young function $A(t)=t\log^{\lfloor p \rfloor}(1+t)$, which clearly satisfies \eqref{Apcondition}, there exits a constant $C<\infty$ such that $M_A w (x) \leq C M^{\lfloor p \rfloor +1}w(x)$ for any weight $w$. This observation is due to Pérez \cite{Pe94,Pe95}.

\section{Applications}\label{Applications}

\subsection{Maximal multiplier of bounded variation}

The essence of the classical Marcinkiewicz multiplier theorem is the observation that a multiplier of bounded variation on the line often satisfies the same norm inequalities as the Hilbert transform. In particular, if $m$ is a bounded variation multiplier and $T_m$ is its associated operator, one may deduce
$$
\int_{\R} |T_mf(x)|^p w(x)dx \leq C \int_\R |f(x)|^p M^{\lfloor p \rfloor +1}w(x)dx
$$
for any weight $w$. Using Corollary \ref{FSCarleson}, the analogous maximal-multiplier inequality in the sense of Oberlin \cite{Ob} follows. Consider the maximal-multiplier operator
$$
\mathcal{M}_{BV}f(x):=\sup_{m: \|m\|_{BV}\leq 1} |(m\widehat{f})\:\widecheck{}\:(x)|
$$
where the supremum is taken over all functions whose variation norm is less or equal than 1. Recall that the variation norm is defined by
\begin{equation}\label{Vnorm}
\|m\|_{BV}:=\|m\|_{\infty}+ \sup_{N,\xi_0<\cdots<\xi_N} \Big(\sum_{i=1}^N |m(\xi_i)-m(\xi_{i-1})| \Big),
\end{equation}
where the supremum is taken over all strictly increasing finite length sequences of real numbers. The second term in the right hand side of \eqref{Vnorm} is known as the total variation of $m$.
\begin{theorem}
For $1<p<\infty$, there is a constant $C<\infty$ such that
\begin{equation}\label{MaxMult}
\int_\R |\mathcal{M}_{BV}f(x)|^pw(x)dx \leq C \int_\R |f(x)|^p M^{\lfloor p \rfloor +1}w(x)dx
\end{equation}
holds for any weight $w$.
\end{theorem}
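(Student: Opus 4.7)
The plan is to establish a pointwise estimate of the form
\[
\mathcal{M}_{BV}f(x) \lesssim |f(x)| + \mathcal{C}f(x)
\]
and then conclude immediately by invoking Corollary \ref{FSCarleson}, since $|f(x)| \leq |f(x)| \cdot \mathbf{1}$ controls its contribution via the trivial bound $w \leq M^{\lfloor p \rfloor +1}w$. So the real content is the reduction of an arbitrary maximal BV multiplier to the Carleson operator, which is exactly the point of view of Oberlin \cite{Ob}.

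To carry this out, I would first decompose a BV multiplier via its Lebesgue--Stieltjes measure. If $\|m\|_{BV} \leq 1$, then $m$ admits a representation
\[
m(\xi) = m(-\infty) + \int_{\R} \chi_{(s,\infty)}(\xi)\, d\mu(s),
\]
where $\mu$ is a finite signed Radon measure with $|\mu|(\R) \leq 1$ and $|m(-\infty)| \leq \|m\|_\infty \leq 1$. Multiplying both sides by $\widehat{f}$ and inverting the Fourier transform (for Schwartz $f$, justified by Fubini), one obtains
\[
T_m f(x) = m(-\infty)\, f(x) + \int_{\R} P_s f(x)\, d\mu(s),
\]
where $P_s f = (\chi_{(s,\infty)} \widehat{f}\,)^\vee$ is the frequency projection onto $(s,\infty)$.

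Next I would exploit the modulation symmetry to control each $P_s f$ by $\mathcal{C}f$. Writing $\chi_{(s,\infty)}(\xi) = \tfrac{1}{2}(1 + \sgn(\xi - s))$ and using that $\sgn(\xi - s)$ is (up to a constant) the symbol of the modulated Hilbert transform $e^{2\pi i s x} H(e^{-2\pi i s \cdot}f)(x)$, we get
\[
|P_s f(x)| \leq \tfrac{1}{2}|f(x)| + C\,|H(e^{-2\pi i s \cdot} f)(x)| \leq \tfrac{1}{2}|f(x)| + C\,\mathcal{C}f(x),
\]
with the last bound uniform in $s$. Combining this with the integral representation above and the bound $|\mu|(\R)\leq 1$ yields $|T_m f(x)| \lesssim |f(x)| + \mathcal{C}f(x)$ uniformly in $m$ with $\|m\|_{BV}\leq 1$, and passing to the supremum proves the desired pointwise domination.

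Finally, applying Corollary \ref{FSCarleson} to the $\mathcal{C}f$ term and the trivial estimate $\int|f|^p w \leq \int|f|^p M^{\lfloor p \rfloor +1}w$ to the $|f|$ term yields \eqref{MaxMult}. The main obstacle is the decomposition step: one must justify the Stieltjes representation of $m$ (handling the possible jump/singular parts and the choice of representative so that $\xi \mapsto \chi_{(s,\infty)}(\xi)$ is well defined $|\mu|$-a.e.), and interchange the integration against $d\mu$ with the inverse Fourier transform. This is done in a standard fashion by first working with Schwartz data and a smooth dense subclass of BV symbols and then passing to the limit, exactly as in the proof of Oberlin's pointwise bound.
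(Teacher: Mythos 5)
Your proposal is correct and follows essentially the same route as the paper: represent a unit-BV multiplier via its Lebesgue--Stieltjes measure, control the resulting frequency half-line projections pointwise by $|f|+\mathcal{C}f$, and conclude from Corollary \ref{FSCarleson} together with the trivial bound $w \leq M^{\lfloor p\rfloor+1}w$. If anything you are slightly more explicit than the paper in writing out the decomposition $\chi_{(s,\infty)}=\tfrac12(1+\sgn(\cdot-s))$ to expose the $|f|$ term, which the paper's shorthand $S_{(t,\infty)}f \leq \mathcal{C}f$ leaves implicit but which reappears in the paper's final estimate $c|f(x)|+|\mathcal{C}f(x)|$.
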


\begin{proof}
Since $m$ is of global bounded variation,
\begin{align*}
T_mf(x)=cf(x)+\int_{\R}S_{(t,\infty)}f(x)dm(t) \leq cf(x)+\int_\R \mathcal{C}f(x)dm(t),
\end{align*}
where $(S_{(t,\infty)}f)\: \widehat{\:\: \:}\:(\xi)=\chi_{(t,\infty)}(\xi)\widehat{f}(\xi)$ and $dm$ denotes the Lebesgue--Stieltjes measure associated to $m$.
Then 
$$
\sup_{m: \|m\|_{BV} \leq 1} |T_mf(x)| \leq c |f(x)| + |\mathcal{C}f(x)|\sup_{m: \|m\|_{BV} \leq 1} \int_\R |dm|(t) \leq c|f(x)| + |\mathcal{C}f(x)|,
$$
where the last inequality follows since the integral of $|dm|$ corresponds to the total variation of $m$. The proof concludes by taking $L^p(w)$ norms and using Corollary \ref{FSCarleson}.
\end{proof}

\begin{remark}
Let $m$ be a multiplier of bounded variation and let $m_t(\xi)=m(t\xi)$. Consider the maximal operator associated to these multipliers, that is, 
$$T^*_mf(x)=\sup_{t>0}|(m_t\widehat{f})\:\widecheck{ }\:(x)|.$$
Since $m$ and $m_t$ have the same variation norm, $T^*_mf(x) \leq \|m\|_{BV} \mathcal{M}_{BV}f(x)$, so the inequality \eqref{MaxMult} also holds for $T^*_m$ in place of $\mathcal{M}_{BV}$.
\end{remark}

\subsection{Carleson-type operators in higher dimensions}

Concerning higher dimensional Carleson operators, the Fefferman--Stein weighted inequality also holds for the operator
$$
\mathcal{C}_Pf(x):=\sup_{t>0} \left|\int_{tP}\widehat{f}(\xi)e^{ix\cdot \xi} dx \right|,
$$
where $P$ is a polyhedron with finitely many faces and the origin in its interior. Indeed, Fefferman deduced in \cite{Fe71} that the norm of this operator is bounded by the norm of the one-dimensional Carleson operator $\mathcal{C}$ in any Banach space.

\subsection{The polynomial Carleson operator}

Let $d\in \mathbb{N}$. The polynomial Carleson operator is defined as
\begin{equation}\label{PolyCarleson}
\CC_df(x):=\sup_{\deg(P)\leq d} \left|\pv \int_\R \frac{e^{iP(y)}}{y}f(x-y)dy \right|,
\end{equation}
where the supremum is taken over all real-coefficient polynomials $P$ of degree at most $d$. Note that for $d=1$ one recovers the definition of the Carleson operator.

It was conjectured by Stein that the operator $\CC_d$ is bounded in $L^p$ for $1<p<\infty$. In the case of periodic functions, this conjecture has been recently solved by Lie \cite{Li11} via time-frequency analysis techniques; see \cite{Li09} for his previous work for $\CC_2$.

One may write $\CC_d f(x)= \sup_{P \in \mathcal{P}} |H^\T(\mathcal{M}^{P}f)(x)|$ for $x \in \T$, where $\mathcal{M}^{\mathcal{P}}f(x)=e^{iP(x)}f(x)$ and $H^\T$ denotes the periodic Hilbert transform. Straightforward modifications in the proof of Theorem \ref{FSmczo} yield a similar result for the periodic case and thus, for any $1<p<\infty$ there is a constant $C<\infty$ such that for any weight $w$
\begin{equation*}
\int_{\T} |\mathcal{C}_df(x)|^p w(x)dx \leq C \int_{\T} |f(x)|^p M^{\lfloor p \rfloor +1}w(x)dx.
\end{equation*}

\section{Vector-valued extensions}\label{sec:VVextensions}

Let $T^\Phi$ be a maximally modulated Calderón--Zygmund operator. Given a sequence of functions $f=(f_j)_{j \in \N}$, consider the vector-valued extension of $T^\Phi$, given by $\bar T^\Phi f=(T^\Phi f_j)_{j \in \N}$. For $q \geq 1$, we define the function $|f|_q$ by
$$
|f(x)|_q=\Big(\sum_{j=1}^\infty |f_j(x)|^q\Big)^{1/q}.
$$
As in the case of $T^\Phi$, we will assume that the operator $\bar T^\Phi$ satisfies the a priori weak type inequalities
\begin{equation}\label{vvwt}
\|\bar T^\Phi f\|_{L^{r,\infty}(\ell^q)} \lesssim \psi(r) \|f\|_{L^r(\ell^q)}
\end{equation}
for $1<r\leq r_0$ and some $r_0>1$. Theorem \ref{FSmAthm} naturally extends to $\bar T^\Phi$ in $L^p(\ell^q)$ in the following way.

\begin{theorem}\label{vvThm}
Given $q\geq 1$, let $\bar T^{\Phi}$ be a vector-valued maximally modulated Calderón--Zygmund operator satisfying \eqref{vvwt} and $1<p<\infty$. Suppose that $A$ is a doubling Young function satisfying
\begin{equation*}
\int_c^{\infty} \left(\frac{t}{A(t)} \right)^{p'-1}\frac{dt}{t}<\infty
\end{equation*}
for some $c>0$. Then there is a constant $C<\infty$ such that for any weight $w$
\begin{equation*}
\int_{\R^n} |\bar T^{\Phi}f(x)|_q^p w(x)dx \leq C \int_{\R^n} |f(x)|_q^p M_Aw(x)dx.
\end{equation*}
\end{theorem}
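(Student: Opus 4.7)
The plan is to imitate the proof of Theorem \ref{FSmAthm}, exploiting the crucial fact that $|\bar T^\Phi f|_q$ is itself a scalar-valued function. This means the local mean oscillation formula (Theorem \ref{LernerFormula}) applies directly to $|\bar T^\Phi f|_q$, and the whole argument reduces to establishing a vector-valued analogue of the Di Plinio--Lerner oscillation estimate (Proposition \ref{oscillationProp}). Once that is in hand, everything else runs as in Section \ref{proofthm}, with the non-negative scalar function $|f|_q$ playing the role that $|f|$ played before.

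First I would prove the vector-valued oscillation estimate: for any cube $Q \subset \R^n$ and $1<r\leq r_0$,
$$
\omega_{\lambda}(|\bar T^\Phi f|_q; Q) \lesssim \psi(r)\left(\frac{1}{|\bar Q|}\int_{\bar Q}|f|_q^r\right)^{1/r} + \sum_{m=0}^\infty \frac{1}{2^{m\delta}}\left(\frac{1}{|2^m Q|}\int_{2^m Q}|f|_q\right).
$$
Splitting $f = f\chi_{\bar Q} + f\chi_{\R^n \setminus \bar Q}$, the local part is handled by combining the vector-valued weak-type hypothesis \eqref{vvwt} with a standard Kolmogorov-type manipulation of the non-increasing rearrangement, producing the first summand. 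For the tail, the triangle inequality $\bigl| |\bar T^\Phi g(x)|_q - |\bar T^\Phi g(x')|_q \bigr| \leq |\bar T^\Phi g(x) - \bar T^\Phi g(x')|_q$ together with the componentwise bound $|T^\Phi g_j(x) - T^\Phi g_j(x')| \leq \sup_\alpha |T(\mathcal{M}^{\phi_\alpha} g_j)(x) - T(\mathcal{M}^{\phi_\alpha} g_j)(x')|$ reduces matters to the kernel smoothness condition (ii) applied componentwise; Minkowski's integral inequality in $j$ then absorbs the $\ell^q$ norm inside the spatial integral, and the standard annular decomposition of $\R^n \setminus \bar Q$ produces the geometric tail sum.

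Next, applying Theorem \ref{LernerFormula} to the scalar function $|\bar T^\Phi f|_q$ on a large cube $Q_0$ and letting $Q_0$ expand to $\R^n$, the median term vanishes in the limit thanks to the unweighted $L^{r,\infty}(\ell^q)$ bound for $\bar T^\Phi$, leaving a pointwise control of $|\bar T^\Phi f(x)|_q$ by a sparse sum of the terms on the right side of the oscillation estimate. Taking $L^p(w)$ norms and treating the tail exactly as in \cite{DiPL} yields the vector-valued analogue of Proposition \ref{L24},
$$
\|{|\bar T^\Phi f|_q}\|_{L^p(w)} \lesssim \inf_{1<r \leq r_0}\Bigl\{\psi(r)\sup_{\CD,\CS}\|\mathcal{A}_{r,\CS}(|f|_q)\|_{L^p(w)}\Bigr\}.
$$
At this point I would invoke Theorem \ref{SparseBound} applied to the non-negative scalar function $|f|_q$ and make the choice $r = r_p = \min\{r_0,(p+2)/3\}$ exactly as in the proof of Theorem \ref{FSmAthm}, to conclude.

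The main obstacle is the careful verification of the vector-valued oscillation estimate; in particular, one has to ensure that the componentwise use of the kernel smoothness together with Minkowski's inequality in $\ell^q$ really does produce a bound governed by $|f|_q$ rather than by a larger quantity such as $\sum_j |f_j|$. Beyond this, no new analytic ingredient is required: the proof of Theorem \ref{SparseBound} applies verbatim once the problem has been scalarised via $\mathcal{A}_{r,\CS}(|f|_q)$, and the passage from the Young function hypothesis on $A$ to the weighted norm inequality is identical to the scalar case.
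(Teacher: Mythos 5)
Your proposal matches the paper's proof in structure and in every essential ingredient: you establish the vector-valued oscillation estimate (the paper's Proposition \ref{PropVVoscillation}) by the same split $f = f\chi_{\bar Q} + f\chi_{\R^n\setminus \bar Q}$, handle the local part via \eqref{vvwt} and the tail via the $\ell^q$-triangle inequality, componentwise kernel regularity, and Minkowski's integral inequality, then invoke Lerner's formula on the scalar function $|\bar T^\Phi f|_q$, pass to the sparse-operator norm bound, and apply Theorem \ref{SparseBound} to $|f|_q$ with the same choice $r_p = \min\{r_0,(p+2)/3\}$. This is precisely the argument the paper gives, so nothing further is needed.
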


As mentioned in Section \ref{sec:dyadic}, Theorem \ref{vvThm} may be proved via the local mean oscillation estimate approach. In particular, we apply Proposition \ref{LernerFormula} to $|\bar T^\Phi f|_q$. To this end, we need to obtain a bound for the local mean oscillation of $|\bar T^\Phi f|_q$ on a cube $Q$.

\begin{proposition}\label{PropVVoscillation}
Let $q \geq 1$ and $\bar T^\Phi$ be a vector-valued maximally modulated Calderón--Zygmund operator satisfying \eqref{vvwt}. Then, for any $1<r\leq r_0$,
\begin{equation}\label{vvoscillation}
\omega_\lambda(|\bar T^\Phi f|_q; Q) \lesssim \psi(r)\left( \frac{1}{|\bar Q|}\int_{\bar Q} |f|^r_q \right)^{1/r} + \sum_{m=0}^\infty \frac{1}{2^{m\delta}} \left(\frac{1}{|2^m Q|}\int_{2^m Q}|f|_q \right).
\end{equation}
\end{proposition}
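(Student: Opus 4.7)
The plan is to mimic the proof of the scalar oscillation estimate in Proposition \ref{oscillationProp} (Di Plinio–Lerner), inserting Minkowski's inequality in $\ell^q$ at the points where one passes between the componentwise estimates and the $\ell^q$ norm $|\cdot|_q$. The starting point is the standard Calderón--Zygmund decomposition $f = f\chi_{\bar Q} + f\chi_{\bar Q^c}$, applied componentwise to the sequence $f = (f_j)_{j\in\N}$. Since $T$ is linear but $T^\Phi$ is only sublinear (because of the supremum), one must be careful: for each $\alpha$ and each $j$,
\[
|T(\mathcal{M}^{\phi_\alpha} f_j)(x)| \le |T(\mathcal{M}^{\phi_\alpha}(f_j\chi_{\bar Q}))(x)| + |T(\mathcal{M}^{\phi_\alpha}(f_j\chi_{\bar Q^c}))(x)|
\]
and an identical inequality with the roles of $f_j$ and $f_j\chi_{\bar Q^c}$ swapped. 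Taking $\sup_\alpha$ on the right, then on the left, yields
\[
|T^\Phi f_j(x) - T^\Phi(f_j\chi_{\bar Q^c})(x)| \le T^\Phi(f_j\chi_{\bar Q})(x).
\]
Applying the reverse triangle inequality in $\ell^q$ then gives
\[
\bigl||\bar T^\Phi f(x)|_q - |\bar T^\Phi(f\chi_{\bar Q^c})(x)|_q\bigr| \le |\bar T^\Phi(f\chi_{\bar Q})(x)|_q.
\]

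Next I would choose the constant $c = |\bar T^\Phi(f\chi_{\bar Q^c})|_q(x_0)$ for an arbitrary $x_0 \in Q$ and bound the oscillation by
\[
\omega_\lambda(|\bar T^\Phi f|_q;Q) \le \bigl(|\bar T^\Phi(f\chi_{\bar Q})|_q\chi_Q\bigr)^*(\lambda|Q|) + \bigl\| (|\bar T^\Phi(f\chi_{\bar Q^c})|_q - c)\chi_Q\bigr\|_\infty.
\]
For the local term I would invoke the vector-valued weak-type assumption \eqref{vvwt}: its distributional inequality and the comparability $|\bar Q| \approx |Q|$ give exactly
\[
\bigl(|\bar T^\Phi(f\chi_{\bar Q})|_q\chi_Q\bigr)^*(\lambda|Q|) \lesssim \psi(r)\left(\frac{1}{|\bar Q|}\int_{\bar Q}|f|_q^r\right)^{1/r}.
\]

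For the nonlocal term I would exploit smoothness of the kernel $K$. For $x,x' \in Q$ and any $\alpha$, writing the difference $T(\mathcal{M}^{\phi_\alpha}(f_j\chi_{\bar Q^c}))(x) - T(\mathcal{M}^{\phi_\alpha}(f_j\chi_{\bar Q^c}))(x')$ as an integral against $K(x,y)-K(x',y)$ over $\bar Q^c$, decomposing into dyadic annuli $2^m Q \setminus 2^{m-1}Q$, and applying the Hörmander-type condition (ii) produces a bound
\[
\sup_\alpha \bigl|T(\mathcal{M}^{\phi_\alpha}(f_j\chi_{\bar Q^c}))(x) - T(\mathcal{M}^{\phi_\alpha}(f_j\chi_{\bar Q^c}))(x')\bigr| \lesssim \sum_{m=0}^\infty 2^{-m\delta}\frac{1}{|2^m Q|}\int_{2^m Q}|f_j|
\]
which is inherited, using $|\sup_\alpha a_\alpha - \sup_\alpha b_\alpha| \le \sup_\alpha |a_\alpha-b_\alpha|$, by $|T^\Phi(f_j\chi_{\bar Q^c})(x) - T^\Phi(f_j\chi_{\bar Q^c})(x')|$. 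Then Minkowski's inequality in $\ell^q$ (valid since $q\ge 1$) moves the $\ell^q$ norm inside both the dyadic sum and the integrals, giving the desired bound in terms of $|f|_q$.

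The main technical obstacle, as indicated above, is the combination of the nonlinearity introduced by $\sup_\alpha$ with the need to split the input into a local and nonlocal piece; this forces us to argue at the level of the linear operator $T(\mathcal{M}^{\phi_\alpha}\cdot)$ before promoting to $T^\Phi$, and to use the reverse triangle inequalities carefully. Once those are in place, the application of the weak-type hypothesis and the Hörmander smoothness condition proceeds exactly as in the scalar case.
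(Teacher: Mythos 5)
Your proof is correct and follows essentially the same route as the paper's: decompose $f = f\chi_{\bar Q} + f\chi_{\bar Q^c}$, use the sublinearity of the supremum together with the reverse triangle inequality in $\ell^q$ to isolate a local piece and an annular piece, estimate the local piece via the vector-valued weak-type hypothesis \eqref{vvwt}, and estimate the annular piece via the Hörmander condition on $K$ together with Minkowski's inequality in $\ell^q$. The only (harmless) structural difference is that you split the comparison into two triangle inequalities — first $||\bar T^\Phi f(x)|_q - |\bar T^\Phi f^\infty(x)|_q|\le |\bar T^\Phi f^0(x)|_q$ at fixed $x$, then the oscillation of $|\bar T^\Phi f^\infty|_q$ between $x$ and a fixed base point — whereas the paper compares $\bar T^\Phi f(x)$ directly to $\bar T^\Phi f^\infty(c_Q)$ and inserts $\pm T(\mathcal{M}^{\phi_\alpha} f_j^\infty)(x)$ inside the single application of the sup-inequality. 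Both regroupings yield identical terms and constants, so the two arguments are the same modulo this reordering.
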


Note that the bound obtained in \eqref{vvoscillation} is the same as the one in \eqref{oscillationMMCZO} with $T^\Phi f$ replaced by $|\bar T^\Phi f|_q$ and $f$ replaced by $|f|_q$. One may then obtain an analogue of Proposition \ref{L24} for $|\bar T^\Phi f|_q$ and $|f|_q$, from which Theorem \ref{vvThm} follows after an application of Theorem \ref{SparseBound}.

We proceed now to the proof of Proposition \ref{PropVVoscillation}. The ideas used are quite standard; see for instance \cite{PT} for a similar result in the case of vector-valued Calderón--Zygmund operators.

\begin{proof}[Proof of Proposition \ref{PropVVoscillation}]
Write $f=f^0+f^\infty$, where $f^0=f \chi_{\bar Q}$. Denote by $c_Q$ the centre of the cube $Q$. Then
\begin{align*}
\Big||\bar T^\Phi f(x)|_q-& |\bar T^\Phi  f^\infty(c_Q)|_q\Big| \\ &
 \leq |\bar T^\Phi f (x)-\bar T^\Phi f^\infty (c_Q)|_q \\
& =  \Big(\sum_{j=1}^\infty \Big|\sup_{\alpha \in A}|T(\mathcal{M}^{\phi_\alpha}f_j)(x)|-\sup_{\alpha \in A}|T(\mathcal{M}^{\phi_\alpha}f_j^\infty)(c_Q)|\Big|^q \Big)^{1/q} \\
& \leq \Big(\sum_{j=1}^\infty \sup_{\alpha \in A}|T(\mathcal{M}^{\phi_\alpha}f_j)(x)-T(\mathcal{M}^{\phi_\alpha}f_j^\infty)(c_Q)|^q \Big)^{1/q} \\
& \leq |\bar T^\Phi f^0(x)|_q+ \Big(\sum_{j=1}^\infty \sup_{\alpha \in A}|T(\mathcal{M}^{\phi_\alpha}f_j^\infty)(x)-T(\mathcal{M}^{\phi_\alpha}f_j^\infty)(c_Q)|^q \Big)^{1/q}.
\end{align*}
Since $\bar T^\Phi$ is of weak-type $(r,r)$, it is straightforward to see that
$$
(|\bar T^\Phi f^0|_q \chi_Q)^*(\lambda|Q|) \lesssim \psi(r) \left(\frac{1}{|\bar Q|} \int_{\bar Q} |f|_q^r \right)^{1/r}.
$$
For the second term, if $x \in Q$,
\begin{align*}
\Big(\sum_{j=1}^\infty \sup_{\alpha \in A} & |T(\mathcal{M}^{\phi_\alpha}f_j^\infty)(x)-T(\mathcal{M}^{\phi_\alpha}f_j^\infty)(c_Q)|^q \Big)^{1/q}
\\ &=
\Big(\sum_{j=1}^\infty \sup_{\alpha \in A}\Big|\int_{\R^n \backslash \bar Q}[K(x,z)-K(c_Q,z)]\mathcal{M}^{\phi_\alpha}f_j^\infty(z)dz\Big|^q \Big)^{1/q}
\\
& \leq \Big(\sum_{j=1}^\infty \Big(\int_{\R^n \backslash \bar Q}|K(x,z)-K(c_Q,z)||f_j^\infty(z)|dz\Big)^q \Big)^{1/q} \\
& \leq \int_{\R^n \backslash 2Q} \Big(\sum_{j=1}^\infty |K(x,z)-K(c_Q,z)|^q|f_j^\infty(z)|^q \Big)^{1/q}dz \\
& \leq \int_{\R^n \backslash 2Q} \Big(\sum_{j=1}^\infty \frac{|x-c_Q|^{q\delta}}{|x-z|^{qn+q\delta}}|f_j^\infty(z)|^q \Big)^{1/q}dz \\
& = \sum_{m=1}^\infty \int_{2^{m+1}Q \backslash 2^mQ} \Big(\sum_{j=1}^\infty \frac{|x-c_Q|^{q\delta}}{|x-z|^{qn+q\delta}}|f_j^\infty(z)|^q \Big)^{1/q}dz \\
& \leq \sum_{m=1}^\infty \frac{\ell(Q)^{\delta}}{(2^{m}\ell(Q))^{n+\delta}} \int_{2^{m+1}\backslash 2^m Q} \Big(\sum_{j=1}^\infty|f_j^\infty(z)|^q \Big)^{1/q}dz \\
& \lesssim \sum_{m=0}^\infty \frac{1}{2^{m\delta}} \left(\frac{1}{|2^mQ|}\int_{2^m Q} |f|_q \right),
\end{align*}
where we have used Minkowski integral inequality and the regularity of the kernel $K$. Choosing $c=|\bar T^\Phi f^\infty(c_Q)|_q$ in the definition of $\omega_\lambda(|\bar T^\Phi f|_q; Q)$,
\begin{align*}
\omega_\lambda(|\bar T^\Phi f|_q; Q) & \leq (|\bar T^\Phi f^0|_q\chi_Q)^*(\lambda|Q|) \\
& \:\:\:\:\:\:\:\: + \sup_{x \in Q} \Big| \Big(\sum_{j=1}^\infty \sup_{\alpha \in A}|T(\mathcal{M}^{\phi_\alpha}f_j^\infty)(x)-T(\mathcal{M}^{\phi_\alpha}f_j^\infty)(c_Q)|^q \Big)^{1/q} \Big|\\
& \leq \psi(r) \left(\frac{1}{|\bar Q|} \int_{\bar Q} |f|_q^r \right)^{1/r}+ \sum_{m=0}^\infty \frac{1}{2^{m\delta}} \left(\frac{1}{|2^mQ|}\int_{2^m Q} |f|_q \right).
\end{align*}
\end{proof}

For $q>1$, the vector-valued version of the Carleson operator is bounded on $L^r$ for $r>1$ (see \cite{RRT,GMS}). Thus, for any $1<p,q<\infty$, one may apply Theorem \ref{vvThm} and obtain the weighted inequality
$$
\int_{\R} \Big(\sum_{j=1}^\infty |\CC f_j(x)|^q \Big)^{p/q} w(x) dx \leq C \int_\R \Big( \sum_{j=1}^\infty |f_j(x)|^q \Big)^{p/q} M^{\lfloor p \rfloor +1}w(x)dx,
$$
with $C$ independent of the weight function $w$. The same result follows for the vector-valued version of $\CC_d$, due to its boundedness on $L^r$ for $1<r<\infty$ - see Remarks in \cite{Li11}.

\section{Two-weighted inequalities}\label{sec:TwoWeighted}

Our approach to obtain Fefferman--Stein inequalities may be extended to more general two-weighted inequalities. In the spirit of the work done for the Hardy--Littlewood maximal operator \cite{Pe95} and for Calderón--Zygmund operators \cite{C-UMP2007,LePointwise,LeCZ}, it is possible to deduce sufficient conditions on a pair of weights $(u,v)$ in order to $T^\Phi:L^p(v) \to L^p(u)$. Following the arguments in the proof of Theorem \ref{FSmAthm}, one may prove that if, for some $1<r<\min\{r_0,p\}$, a pair of weights $(u,v)$ satisfy
$$
[u,v]_{A,B}=\sup_{Q \subset \R^n} \|u^{1/p}\|_{A,Q}\|v^{-r/p}\|_{B,Q}^{1/r}<\infty,
$$
where $A$ and $B$ are doubling Young functions such that $\bar A \in B_{p'}$ and $\bar B \in B_{\frac{p+1}{2r}}$, there exists a constant $C=C_{n,p,A,B,u,v}<\infty$ such that
$$
\int_{\R^n} |T^\Phi f(x)|^p u(x)dx \leq C \int_{\R^n}|f(x)|^pv(x)dx.
$$
However, there is an alternative way of obtaining such two-weighted inequalities that does not involve the linearisation and the adjoint operator argument in the proof given for Theorem \ref{FSmAthm}. This approach is along the lines of a two-weighted inequality for Calderón--Zygmund operators proved by Lerner \cite{LeCZ}. Our two-weighted bumped-type inequalities seem to be novel in the setting of the Carleson operator and the related operators discussed in Section \ref{Applications}.

\begin{theorem}\label{2wThm}
Let $T^\Phi$ be a maximally modulated Calderón--Zygmund operator satisfying \eqref{wt}. Let $1<p<\infty$ and $A, B$ be doubling Young functions such that $\bar A \in B_{p'}$ and $\bar B \in B_{p/r}$ for some $1<r<\min\{r_0,p\}$. Let $u$ and $v$ be positive weights such that
$$
\sup_{Q \subset \R^n} \|u^{1/p}\|_{A,Q}\|v^{-r/p}\|^{1/r}_{B,Q} < \infty.
$$
Then there is a constant $C=C_{n,p,A,B,u,v}<\infty$ such that
\begin{equation}\label{2wMMCZOeq}
\int_{\R^n} |T^\Phi f(x)|^p u(x)dx \leq C \int_{\R^n}|f(x)|^pv(x)dx.
\end{equation}
\end{theorem}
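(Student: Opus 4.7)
The plan is to follow the same scheme as in the proof of Theorem \ref{FSmAthm}: Proposition \ref{L24} reduces \eqref{2wMMCZOeq} to the corresponding two-weighted bound
$$
\|\mathcal{A}_{r,\mathcal{S}}f\|_{L^p(u)}\lesssim \|f\|_{L^p(v)}
$$
for the dyadic sparse operators, uniformly in the dyadic grid $\mathcal{D}$ and the sparse family $\mathcal{S}\subset\mathcal{D}$. The novelty compared with Theorem \ref{SparseBound} is that I would avoid the linearisation and adjoint trick and argue directly, following the blueprint of Lerner's two-weight bound for Calderón-Zygmund operators in \cite{LeCZ}.

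By $L^p(u)$-duality, it suffices, fixing $g\geq 0$ with $\|g\|_{L^{p'}(u)}\leq 1$, to control
$$
I:=\sum_{Q\in\mathcal{S}}\left(\frac{1}{|\bar Q|}\int_{\bar Q}|f|^r\right)^{1/r}\int_Q g\,u\,dx.
$$
The key manoeuvre is the twofold use of the generalised Hölder inequality in Orlicz spaces: with the complementary pair $(\bar B,B)$ on $\bar Q$, writing $|f|^r=(|f|^r v^{r/p})\cdot v^{-r/p}$, and with $(\bar A,A)$ on $Q$, writing $gu=(gu^{1/p'})\cdot u^{1/p}$. Multiplying the two estimates, the product $\|u^{1/p}\|_{A,Q}\,\|v^{-r/p}\|_{B,\bar Q}^{1/r}$ is collapsed to a constant by the testing hypothesis (combined with the doubling of $A$ and $B$ to compare Luxemburg norms over $Q$ and its dilate $\bar Q$), so that
$$
I\lesssim\sum_{Q\in\mathcal{S}}\bigl\||f|^r v^{r/p}\bigr\|_{\bar B,\bar Q}^{1/r}\,\|g u^{1/p'}\|_{\bar A,Q}\,|Q|.
$$

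Sparseness ($|Q|\leq 2|E(Q)|$ with $\{E(Q)\}$ pairwise disjoint), together with the pointwise bounds $\|\cdot\|_{\bar B,\bar Q}\leq M_{\bar B}(\cdot)(x)$ and $\|\cdot\|_{\bar A,Q}\leq M_{\bar A}(\cdot)(x)$ valid for every $x\in E(Q)\subset Q\subset \bar Q$, then turns the sum into a single integral
$$
I\lesssim\int_{\R^n}\bigl(M_{\bar B}(|f|^r v^{r/p})(x)\bigr)^{1/r}\,M_{\bar A}(g u^{1/p'})(x)\,dx.
$$
A final Hölder with exponents $p$ and $p'$, combined with Theorem \ref{PeCharac}(iii), closes the argument: $\bar B\in B_{p/r}$ yields the $L^{p/r}$-boundedness of $M_{\bar B}$ and hence a factor of $\|f\|_{L^p(v)}$, while $\bar A\in B_{p'}$ yields the $L^{p'}$-boundedness of $M_{\bar A}$ and hence a factor of $\|g\|_{L^{p'}(u)}\leq 1$. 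The main obstacle I foresee is not conceptual but rather the bookkeeping between Luxemburg norms on $Q$ and on $\bar Q=2\sqrt{n}\,Q$; doubling of $A,B$ makes these comparable and so can be absorbed into the implicit constants. The restriction $1<r<\min\{r_0,p\}$ is used twice, once via $r\leq r_0$ to invoke Proposition \ref{L24}, and once via $r<p$ to ensure $p/r>1$ so that $M_{\bar B}$ is bounded on $L^{p/r}$.
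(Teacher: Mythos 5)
Your proposal is essentially the paper's proof: Proposition \ref{L24} reduces to the sparse operators, a single dual function is fixed, the twofold Orlicz--H\"older inequality splits the local averages against the testing quantity, sparseness converts the sum into an integral of $M_{\bar B}$ and $M_{\bar A}$, and the final H\"older with exponents $p,p'$ together with Theorem \ref{PeCharac}(iii) (i.e.\ $\bar B\in B_{p/r}$ and $\bar A\in B_{p'}$) closes the estimate. The only cosmetic differences are that the paper sets up the duality unweighted (pairing against $u^{1/p}g$ with $\|g\|_{p'}=1$ rather than against $gu$ with $\|g\|_{L^{p'}(u)}\leq 1$, an equivalent normalisation) and, more to the point of your ``bookkeeping'' worry, it applies H\"older for the $u^{1/p}g$ factor over $\bar Q$ rather than $Q$, so the testing condition is invoked directly on the cube $\bar Q$ and no comparison between Luxemburg norms over $Q$ and $\bar Q$ is needed at all.
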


\begin{proof}
By Proposition \ref{L24} it is enough to see that
$$
\|\mathcal{A}_{r,\mathcal{S}}f\|_{L^p(u)} \leq C \|f\|_{L^p(v)}
$$
with constant independent of the dyadic sparse family $\CS$. By duality there exists $g \in L^{p'}$, $\|g\|_{p'}=1$ such that
$$
\left(\int_{\R^n} \mathcal{A}_{r,\mathcal{S}}f(x)^p u(x)dx \right)^{1/p}=\int_{\R^n} \mathcal{A}_{r,\mathcal{S}}f(x) u(x)^{1/p}g(x)dx.
$$
Then,
\begin{align*}
\int_{\R^n} (\mathcal{A}_{r,\mathcal{S}}f) u^{1/p}g &= \sum_{Q \in \CS} \left(\frac{1}{|\bar Q|}\int_{\bar Q} |f|^r \right)^{1/r} \int_Q u^{1/p}g \\
& =\sum_{Q \in \CS} \left(\frac{1}{|\bar Q|}\int_{\bar Q} |f|^rv^{r/p} v^{-r/p} \right)^{1/r} \left( \frac{1}{|\bar Q|} \int_{\bar Q} u^{1/p}g \right) |\bar Q| \\
& \lesssim \sum_{Q \in \CS} \|f^r v^{r/p}\|_{\bar B, \bar Q}^{1/r} \|v^{-r/p}\|_{B,\bar Q}^{1/r} \|u^{1/p}\|_{A,\bar Q} \|g\|_{\bar A,\bar Q} |E(Q)| \\
& \leq \sum_{Q \in \CS} \int_{E(Q)} (M_{\bar B}(f^r v^{r/p}))^{1/r}M_{\bar A}g \\
& \leq \int_{\R^n} (M_{\bar B}(f^r v^{r/p}))^{1/r}M_{\bar A}g \\
& \leq \|M_{\bar B}(f^rv^{r/p})\|_{p/r}^{1/r} \|M_{\bar A}g\|_{p'}\\
& \lesssim \|f^rv^{r/p}\|_{p/r}^{1/r} \|g\|_{p'}
\\
& =\|f\|_{L^p(v)},
\end{align*}
where we have used Hölder's inequality for Young functions, the sparseness of the family $\CS$ and the boundedness of the operators $M_{\bar A}$ and $M_{\bar B}$.
\end{proof}

\begin{remark}
The obvious vector-valued extensions considered in Section \ref{sec:VVextensions} also hold for this more general two-weighted case.
\end{remark}

\begin{remark}\label{RecoverFS}
One may recover the Fefferman--Stein weighted inequalities \eqref{FSmA} from Theorem \ref{2wThm} by considering the pair of weights $(w,M_{\Gamma}w)$, where $\Gamma(t)=A(t^{1/p})$ and the Young function $B(t)=t^{(p/r)'+\varepsilon}$, that satisfies $\bar B \in B_{p/r}$. In this case, the constant $C$ in \eqref{2wMMCZOeq} does not depend on $w$, since
\begin{align*}
[w,M_\Gamma w]_{A,B}& =\sup_{Q \subset \R^n} \|w^{1/p}\|_{A,Q}\|(M_\Gamma w)^{-r/p}\|^{1/r}_{B,Q}\\
& = \sup_{Q \subset \R^n} \|w\|_{\Gamma,Q}^{1/p} \left(\frac{1}{|Q|}\int_{Q}(M_\Gamma w)^{-(r/p)((p/r)' + \varepsilon)}\right)^{\frac{1}{(p/r)'+\varepsilon}\frac{1}{r}} \\
& \leq \sup_{Q \subset \R^n} \left(\frac{1}{|Q|}\int_{Q}(M_\Gamma w)^{-(r/p)((p/r)'+\varepsilon)} (M_\Gamma w)^{(1/p)((p/r)'+\varepsilon)r}\right)^{\frac{1}{(p/r)'+\varepsilon}\frac{1}{r}} \\
& = \sup_{Q \subset \R^n} \left( \frac{1}{|Q|} \int_Q 1 \right)^{\frac{1}{(p/r)'+\varepsilon}\frac{1}{r}} \\
& = 1,
\end{align*}
where the second equality follows from the definition of Luxemburg norm.
\end{remark}

An advantage of this alternative proof for inequality \eqref{2wMMCZOeq} is that it can easily be adapted to a multilinear setting, since it does not involve any linear duality. In particular, one may obtain two-weighted inequalities for the multilinear Calderón--Zygmund operators introduced by Grafakos and Torres \cite{GT}, that is, a multilinear operator $T$ bounded from $L^{q_1}\times \cdots \times L^{q_m} \to L^q$ for some $1 \leq q_1, \dots, q_m <\infty$ satisfying $\frac{1}{q}=\frac{1}{q_1}+\cdots \frac{1}{q_m}$ and that can be represented as
$$
T(f_1,\dots,f_m)(x)=\int_{(\R^n)^m}K(x,y_1,\dots,y_m)f_1(y_1)\cdots f_m(y_m)dy_1\cdots dy_m
$$
for all $x\not \in \cap_{j=1}^m \supp f_j$, where the kernel $K:(\R^n)^{m+1} \backslash \Delta \to \R$, with $\Delta=\{(x,y_1,\cdots,y_m) : x=y_1=\cdots = y_m\}$, satisfies the following size condition
$$
|K(y_0,y_1,\dots,y_m)| \leq \frac{A}{(\sum_{k,l=0}^m|y_k-y_l|)^{mn}},
$$
and the regularity condition
$$
|K(y_0,\dots, y_j, \dots, y_m)-K(y_0,\dots, y_j', \dots, y_m)| \leq \frac{A|y_j-y_j'|^{\delta}}{(\sum_{k,l=0}^m|y_k-y_l|)^{mn+\delta}}
$$
for some $\delta>0$ and all $0\leq j \leq m$, whenever $|y_j-y_j'|\leq \frac{1}{2}\max_{0\leq k \leq m}|y_j-y_k|$.

Via a local mean oscillation estimate, Damián, Lerner and Pérez \cite{DLP} reduced norm estimates for such a multilinear operator $T$ to norm estimates for a multilinear version of the dyadic sparse operator $\CA_{r,\CS}$. Then, the proof of Theorem \ref{2wThm} can be adapted to the multilinear Calderón--Zygmund framework; given $1<p,p_1,\dots,p_m<\infty$ satisfying $\frac{1}{p}=\frac{1}{p_1}+\cdots +\frac{1}{p_m}$, and $(u,v_1,\dots,v_m)$ weights such that
$$
\sup_{Q \subset \R^n} \|u^{1/p}\|_{A,Q} \prod_{i=1}^m \|v_i^{-1/p_i}\|_{B_i,Q} <\infty,
$$
where $A,B_1,\dots,B_m$ are doubling Young functions such that $\bar A \in B_{p'}$ and $\bar B_i \in B_{p_i}$, $i=1,\dots,m$, one has
$$
\int_{\R^n}|T(f_1,\dots,f_m)(x)|^pu(x)dx \leq C \prod_{j=1}^m \Big( \int_{\R^n} |f_j(x)|^{p_j}v_j(x)dx \Big)^{p/p_j}.
$$
Of course the multilinear analogue of Remark \ref{RecoverFS} allows one to deduce a Fefferman--Stein weighted inequality for $T$, that is, given $1<p,p_1,\dots,p_m<\infty$ satisfying $\frac{1}{p}=\frac{1}{p_1}+\cdots +\frac{1}{p_m}$ and a doubling Young function $A$ satisfying \eqref{Apcondition},
\begin{equation*}
\int_{\R^n} |T(f_1,\dots,f_m)(x)|^p w(x)dx \leq C \prod_{j=1}^m \Big(\int_{\R^n} |f_j(x)|^{p_j} M_Aw(x)dx \Big)^{p/p_j}.
\end{equation*}
This allows one to recover the result obtained by Hu \cite{Hu2010} via different methods; Hu obtained the above inequality by induction on the level of linearity and using the linear result \eqref{WPineq}.

\section{Further remarks}\label{sec:remarks}

\subsection{Lacunary Carleson operator}Let $\Lambda=\{\lambda_j\}_j$ be a lacunary sequence of integers, that is, $\lambda_{j+1}\geq \theta \lambda_j$ for all $j$ and for some $\theta>1$ and consider the lacunary Carleson maximal operator
$$
\mathcal{C}_{\Lambda}f(x)=\sup_{j \in \mathbb{N}} \left| \pv \int_\R \frac{e^{2\pi i \lambda_j y}}{x-y}f(y)dy \right|.
$$
Of course one has the pointwise estimate $\mathcal{C}_{\Lambda}f(x)\leq \mathcal{C}f(x)$, so the Fefferman--Stein inequality \eqref{FSimportant} trivially holds for $\mathcal{C}_{\Lambda}$. This may be reconciled with a Fefferman--Stein inequality for $\mathcal{C}_\Lambda$ obtained by more classical techniques. Consider the more classical version of the lacunary Carleson operator in terms of the lacunary partial Fourier integrals. Following the lines of \cite{CRV},
$$
S_{\Lambda}^*f(x)=\sup_{k} |S_{\lambda_k}f(x)| \leq cMf(x)+\Big(\sum_{k} |S_{\lambda_k}(f \ast \psi_k)(x)|^2 \Big)^{1/2},
$$
where $\widehat{S_{\lambda_k}f}(\xi):= \chi_{[-\lambda_k, \lambda_k]}(\xi)\widehat{f}(\xi)$, $\psi$ is a suitable Schwartz function, and $\widehat{\psi}_k(\xi):=\widehat{\psi}(\theta^{-k}\xi)$. Since $S_{\lambda_k}$ satisfies the same Lebesgue space inequalities as the Hilbert transform, from the estimate \eqref{WPineq} and weighted Littlewood--Paley theory (which can be obtained via a standard Rademacher function argument and the results from Pérez \cite{Pe94} and Wilson \cite{Wi07}), one may deduce the inequality \eqref{FSimportant} for $\CC_{\Lambda}$ with a higher number of compositions of the Hardy--Littlewood maximal operator $M$.

\subsection{Walsh--Carleson operator}Following the lines of \cite{DiPL}, one may obtain the corresponding Fefferman--Stein inequality for the Walsh--Carleson maximal operator $\mathcal{W}$. This operator is defined as
$$
\mathcal{W}f(x)=\sup_{n \in \mathbb{N}}|\mathcal{W}_nf(x)|,
$$
for $x \in \mathbb{T}=[0,1],$ where $\mathcal{W}_n$ denotes the $n$-th partial Walsh--Fourier sum, often considered as a discrete model of the Fourier case. We refer to \cite{ThWPA} for definitions and elementary results on Walsh--Fourier series. Relying on the weak-type estimate $\|\mathcal{W}f\|_{r,\infty} \lesssim r' \|f\|_{r}$, established in \cite{DiPlinioCollect}, it is proven in \cite{DiPL} that for $1<p<\infty$ and any weight $w$,
$$
\|\mathcal{W}f\|_{L^p(w)} \lesssim \inf_{1<r\leq 2} \Big\{r'\sup_{\mathcal{S}}\|\widetilde{\mathcal{A}}_{r,\mathcal{S}}\|_{L^p(w)}\Big\},
$$
where
$$
\widetilde{\CA}_{r,\mathcal{S}}f(x)=\sum_{Q \in \mathcal{S}} \left(\frac{1}{|Q|}\int_Q |f|^r \right)^{1/r}\chi_Q(x)
$$
and $\mathcal{S}\subset \mathcal{D}(\mathbb{T})$ is a sparse family of dyadic cubes.
Thus,
$$
\int_{\mathbb{T}} |\mathcal{W}f(x)|^p w(x)dx \leq C \int_{\mathbb{T}} |f(x)|^p M^{\lfloor p \rfloor +1}w(x)dx
$$
follows by adapting the proof of Theorem \ref{SparseBound} to the operators $\widetilde{\CA}_{r,\mathcal{S}}$ and to functions defined in $\mathbb{T}$.

\subsection{The $p$-dependence of the constants}
The constant $C$ in our Fefferman--Stein inequalities \eqref{FSmA} depends on the behaviour of the assumed weak-type estimate \eqref{wt}, the Young function $A$, the exponent $p$ and the dimension $n$. In particular,
$$
C=\psi(r_p)C_{n,p,A}\left( \Big(\frac{p+1}{2r_p} \Big)' \right)^{1/r_p},
$$
where $C_{n,p,A}$ is the constant obtained in Theorem \ref{SparseBound} and $r_p=\min\{r_0, \frac{p+2}{3}\}$. A careful inspection of the proof of Theorem \ref{PeCharac} in \cite{Pe95} appears to reveal that, at least for the specific choice of Young function $A(t)=t \log^{\lfloor p \rfloor}(1+t)$, the constant $C_{n,p,A}$ depends exponentially on $p'$. Thus, for the specific weighted inequalities \eqref{FSimportant}, our techniques show that the constant $C$ is bounded by
$$
\frac{C_n}{p-1} \psi(r_{p}) a^{p'}
$$
for certain $a>1$, an object which grows exponentially in $p'$ as $p$ approaches $1$; note that in order for that growth not to be exponential in $p'$ the function $\psi(r)$ should decay exponentially as $r$ approaches $1$, a property that the operators $T^\Phi$ do not satisfy. It would be interesting to determine if a better behaviour on the constant $C$ could be obtained as $p$ approaches $1$, and in particular if the Fefferman--Stein inequalities \eqref{FSimportant} are sensitive to the behaviour of the weak-type norm of $T^\Phi$.

Concerning that weak-type norm, one possible way of obtaining information about the function $\psi$ is by studying the boundedness of the operator $T^\Phi$ near $L^1$. As shown in \cite{DiPL}, if
\begin{equation}\label{nearL1}
\|T^\Phi (f\chi_Q)\|_{L^{1,\infty}(Q)} \lesssim |Q|\|f\|_{\Gamma,Q}
\end{equation}
for each cube $Q \subset \R^n$, where $\Gamma$ is a Young function and the implicit constant is independent of $Q$, then
\begin{equation}\label{inferredBound}
\|T^\Phi f\|_{r,\infty} \lesssim \left( \sup_{t \geq 1} \frac{\Gamma(t)}{t^r} \right)^{1/r} \|f\|_r.
\end{equation}
In the case of the Carleson operator $\mathcal{C}$ the study of the boundedness near $L^1$ is equivalent to determine an Orlicz space $L^\Gamma(\mathbb{T})$, with $L^p(\mathbb{T}) \subsetneq L^\Gamma(\mathbb{T}) \subsetneq L^1(\mathbb{T})$ for all $p>1$, such that there is convergence of Fourier partial sums for functions $f \in L^\Gamma(\mathbb{T})$. It is a well known conjecture that the largest Orlicz space such that convergence holds is $\Gamma(t)=t\log(e+t)$; the current best known result, due to Antonov \cite{Antonov}, ensures that it holds for $\Gamma(t)=t\log(e+t)\log \log \log(e^{e^e}+t)$. An application of Antonov's result in the weak-type estimate \eqref{inferredBound} yields
$$
\|\mathcal{C}f\|_{r,\infty} \lesssim r'\log \log (e^e+r') \|f\|_r,
$$
which seems to be the best weak-type bound for the Carleson operator in the literature. Of course if the \textit{$L\log L$ conjecture} were true one would obtain
\begin{equation*}
\|\mathcal{C}f\|_{r,\infty} \lesssim \frac{1}{r-1}\|f\|_r
\end{equation*}
for $1<r\leq 2$. As discussed above, it would be interesting to determine if the constant $C$ behaves as $(p-1)^{-2}$ as $p$ approaches $1$ in case such conjecture were true.

We should notice that the weak-type bounds provided by the mechanism \eqref{nearL1}--\eqref{inferredBound} are not necessarily sharp. For example, in the case of the lacunary Carleson operator $\mathcal{C}_\Lambda$, in order to establish by such method the weak-type bound
$$
\|\mathcal{C}_\Lambda\|_{r,\infty} \lesssim \log(e+r')\|f\|_r
$$
for $1<r\leq 2$, obtained by Di Plinio in \cite{DiPlinioComptes}, one would need to have a positive answer to a conjecture of Konyagin \cite{Konyagin}, which states that the best Orlicz space such that \eqref{nearL1} holds for $\mathcal{C}_\Lambda$ is given by $\Gamma(t)=t \log \log (e^e+t)$; the current best result is with $\Gamma(t)=t \log\log(e^e+t)\log \log \log \log (e^{e^{e^e}}+t)$, see \cite{DiPlinioCollect, LieLac}.



\bibliographystyle{abbrv} 

\bibliography{BeltranFSCarXiv} 

\end{document}